\numberwithin{equation}{section}
\newtheorem{theorem}{Theorem}[section]
\newtheorem{lemma}[theorem]{Lemma}
\newtheorem{corollary}[theorem]{Corollary}
\newtheorem{proposition}[theorem]{Proposition}
\newtheorem{definition}[theorem]{Definition}
\newtheorem{theorem*}{Theorem}
\newtheorem{remark*}{Remark}
\newcommand{\ma}[1]{\ensuremath{\mathbb{#1}}}
\font\bb=msbm7 at 10 pt
\def \Z {\hbox{\bb Z}}
\def \N {\hbox{\bb N}}
\def \F {\hbox{\bb F}}
\def \Tr {\mbox{\rm{Tr}}}
\def \I {\mbox{\bf I}}
\newcommand{\NP}{\ensuremath{\mbox{\rm{NP}}}}
\newcommand{\GNP}{\ensuremath{\mbox{\rm{GNP}}}}
\newcommand{\Ima}{\ensuremath{\mbox{\rm{Im }}}}
\author{R\'egis Blache}
\address{\'Equipe LAMIA,
\'ESP\'E de Guadeloupe}
\email{rblache@espe-guadeloupe.fr}
\title[First vertices]{First vertices for hyperelliptic curves in characteristic two}
\begin{document}

\begin{abstract}
We study the Newton polygons of numerators of the zeta functions of $2$-rank $0$ hyperelliptic curves in characteristic $2$. We determine their first generic vertex, and their first vertex in some other non generic cases.
\end{abstract}

\subjclass[2000]{}
\keywords{}

\maketitle

\section*{Introduction}

In this paper, we consider hyperelliptic curves in characteristic two having $2$-rank $0$. Precisely, we try to determine the first vertex of the Newton polygon of the numerator of their zeta function in some cases. The stratification by the Newton polygons of the moduli space of principally polarized abelian varieties has been studied \cite{oo}, and this is a way to study the image of hyperelliptic curves under the Torelli morphism in this moduli space.

These questions have already drawn some attention. In \cite{vv}, van der Geer and van der Vlugt study some families of supersingular (i.e. having the highest possible Newton polygon) hyperelliptic curves; then they use these families in \cite{vv2} to show that there exist supersingular curves of any genus in characteristic two. On the other hand, in \cite{sz}, Scholten and Zhu give a lower bound for the first slope of the Newton polygon of such an hyperelliptic curve, and sufficient conditions for a given curve to reach this bound. The same authors give all possible first slopes for the Newton polygons of $2$-rank $0$ hyperelliptic curves in characteristic two when the genus is at most $8$, see \cite{sz1}.

Recall from \cite[Proposition 4.1]{sz} that a genus $g$ hyperelliptic curve having $2$-rank $0$ defined over the finite field $k=\F_q$ admits an equation of the form
$$y^2+y=f(x)$$
where $f(x):=\sum_{i=0}^{g}c_{2i+1}x^{2i+1}$ is a polynomial of degree $2g+1$. We shall denote this curve by $C_f$ in the following. Its zeta function is rational, and we denote by $\NP_q(C_f)$ the Newton polygon of its numerator $L(C_f,T)$ with respect to the $q$-adic valuation normalized by $v_q(q)=1$. This is a convex polygon with end points $(0,0)$ and $(2g,g)$, positive slopes since the $2$-rank is zero, and break points having integer coordinates.

If $\psi$ denotes a non-trivial additive character of $\F_q$, one can associate to $f$ the following family of exponential sums, and the associated $L$-function
$$S_m(f):=\sum_{x\in \ma{F}_{q^m}}\psi(\Tr_{\ma{F}_{q^m}/\ma{F}_q}(f(x))),~ L(f,T)=\exp\left(\sum_{m\geq 1} S_m(f)\frac{T^m}{m}\right)$$
We have $L(C_f,T)=L(f,T)$, and the congruence given in \cite{cong} applies to this last function. Along this paper, we collect the information necessited to write down this congruence explicitely in some cases. Once this has been done, the determination of the first vertex follows from some simple semi-algebra.

Our first result precises \cite[Theorem 1.1]{sz}.
\begin{theorem*}
\label{fv}
Assume $g\geq 3$, and set $n:=\lfloor\log_2(2g+2)\rfloor$. 
\begin{itemize}
	\item[(i)] When $2^n-1\leq 2g+1<2^{n+1}-3$, the first vertex of $\NP_q(C_f)$ is $(n,1)$ if, and only if we have $c_{2^n-1}\neq 0$;
	 \item[(ii)] Assume $2g+1=2^{n+1}-3$; 
	 
	 \begin{itemize}
	\item[(a)] the first vertex of $\NP_q(C_f)$ is $(2n,2)$ if, and only if we have $c_{3\cdot 2^{n-1}-1}\neq 0$;
	 \item[(b)] when $c_{3\cdot 2^{n-1}-1}= 0$, the first vertex of $\NP_q(C_f)$ is $(n,1)$ if, and only if we have $c_{2^n-1}\neq 0$.
\end{itemize}
\end{itemize}
\end{theorem*}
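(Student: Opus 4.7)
Following the strategy laid out in the introduction, the approach is to combine the identity $L(C_f,T)=L(f,T)$ with the congruence of \cite{cong} to expand $L(f,T)$ modulo a small power of $q$, then read off the first vertex from an explicit formula for the low-degree coefficients. Writing $L(f,T)=\sum_{j\geq 0}a_j T^j$ with $a_0=1$, the first vertex being $(n,1)$ amounts to $v_q(a_n)=1$ together with $v_q(a_j)>j/n$ for $1\leq j\leq n-1$, and the first vertex being $(2n,2)$ amounts to $v_q(a_{2n})=2$ with $v_q(a_j)\geq j/n$ for $1\leq j\leq 2n-1$ (equality allowed only at $j=n$). The lower bound of Scholten--Zhu \cite[Theorem 1.1]{sz} already guarantees $v_q(a_j)\geq j/n$ on these ranges, so the whole task is to detect when this bound is attained.

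\textbf{Step 1: unfolding the congruence.} Apply the congruence of \cite{cong} to expand $a_n$ modulo $q^2$ (respectively $a_{2n}$ modulo $q^3$) as an explicit sum over tuples $(d_1,\ldots,d_r)$ of odd integers in $\{1,3,\ldots,2g+1\}$ satisfying combinatorial constraints coming from the congruence (base-$2$ digit sums of the $d_i$, carries, sums constrained modulo $q-1$, etc.). Each admissible tuple contributes a monomial in the $c_{d_i}$ with a prescribed $q$-adic valuation, determined by the digit sums and by the way the $d_i$ combine in base $2$.

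\textbf{Step 2: semi-algebra for case (i).} Under the hypothesis $2^n-1\leq 2g+1<2^{n+1}-3$, the integer $2^n-1$ is the unique odd integer in $[1,2g+1]$ whose base-$2$ expansion consists of exactly $n$ ones and lies below $2^n$. The combinatorial analysis should show that the only admissible tuples producing a term of $q$-valuation $1$ in the expansion of $a_n$ are those in which each $d_i$ equals $2^n-1$, yielding a congruence of the shape $a_n\equiv q\cdot u\cdot c_{2^n-1}^{m}\pmod{q^2}$ for some positive integer $m$ and some $p$-adic unit $u$. Therefore $v_q(a_n)=1$ iff $c_{2^n-1}\neq 0$, and part (i) follows.

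\textbf{Step 3: case (ii) and the main obstacle.} When $2g+1=2^{n+1}-3$, the extra odd exponent $3\cdot 2^{n-1}-1$ becomes available and also has base-$2$ digit-sum $n$, but lies above $2^n$. The $a_n$-analysis of Step 2 goes through unchanged to give the conclusion of (i)(b) when $c_{3\cdot 2^{n-1}-1}=0$; and a parallel analysis of $a_{2n}$ modulo $q^3$ should reveal that $c_{3\cdot 2^{n-1}-1}$ contributes an isolated term of valuation exactly $2$, while all other contributions have valuation $\geq 3$. Consequently $v_q(a_{2n})=2$ iff $c_{3\cdot 2^{n-1}-1}\neq 0$, yielding (ii)(a). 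The technical heart of the argument lies in this combinatorial step---enumerating admissible tuples, tracking carries in base $2$, and verifying that the only tuples of minimal $q$-valuation are those highlighted---and is delicate precisely because the two privileged exponents $2^n-1$ and $3\cdot 2^{n-1}-1$ share the same digit-sum, so disentangling their roles in the respective expansions of $a_n$ and $a_{2n}$ is what drives the jump of the first vertex at the threshold $2g+1=2^{n+1}-3$.
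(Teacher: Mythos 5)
Your plan correctly identifies the two inputs (the identity $L(C_f,T)=L(f,T)$ and the congruence of \cite{cong}) and the logical shape of the target (pinning down $v_q(a_n)$ resp.\ $v_q(a_{2n})$ given the Scholten--Zhu lower bound), but it stops precisely where the work begins. Steps~2 and~3 are phrased entirely in the conditional mood---``the combinatorial analysis should show,'' ``a parallel analysis\ldots should reveal,'' ``the only tuples of minimal $q$-valuation are those highlighted''---and none of these claims is established. You even flag this yourself: ``the technical heart of the argument lies in this combinatorial step.'' As it stands, the proposal is a plausible outline, not a proof.

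The paper avoids the open-ended carry-tracking you allude to by routing the computation through a different piece of machinery. The congruence of \cite{cong} is not a sum over tuples but a determinant congruence
$L(C_f,T)\equiv \det\bigl(\I-M(\Gamma)^{\tau^{m-1}}\cdots M(\Gamma)\pi^{m(p-1)\delta}T\bigr) \mod I_{\delta}$,
where $M(\Gamma)$ is an explicit matrix built from the \emph{minimal irreducible solutions} of the modular system (\ref{mod}). Section~\ref{s2} classifies those solutions (via the support/density formalism of Section~\ref{s1}), and Proposition~\ref{firstvertex} then converts ``first vertex of $\NP_q(C_f)$'' into ``$\dim V_{ss}$ for the Frobenius-linear map $\varphi$ attached to $\overline{M}(\Gamma)$.'' Once that reduction is in place, cases (i), (iia), (iib) follow by inspecting whether the matrices
$A_n(c_{2^n-1})$ and
$\bigl(\begin{smallmatrix} A_n(c_{2^n-1}) & B_n(1) \\ B_n(c_{3\cdot 2^{n-1}-1}) & A_n(0)\end{smallmatrix}\bigr)$
are invertible and, if not, what the stable image is. That invertibility test is what produces the clean ``if and only if $c_{2^n-1}\neq 0$'' (resp.\ $c_{3\cdot 2^{n-1}-1}\neq 0$) dichotomy you want; a bare-hands expansion of $a_n$ modulo $q^2$ would have to re-derive exactly the information encoded by the minimal irreducible solutions and the matrix $M(\Gamma)$, and without that scaffolding there is no reason the enumeration of ``admissible tuples'' terminates in anything manageable. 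To repair the proposal you would need either to import Proposition~\ref{firstvertex} and the Section~\ref{s2} classification explicitly, or to actually carry out the digit/carry combinatorics in full and show that it reproduces the same privileged exponents $2^n-1$ and $3\cdot 2^{n-1}-1$; as written, neither is done.
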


Note that assertions (i) and (iia) give the first vertex of the generic Newton polygon associated to the family of genus $g$ and $2$-rank $0$ hyperelliptic curves by Grothendieck's specialization theorem. In the case $2g+1=2^n-1$, we must have $c_{2^n-1}\neq 0$ and the first vertex is $(n,1)$ for all curves $C_f$. If moreover $n\geq 3$, we get \cite[Theorem 1.2]{sz}: there does not exist any supersingular elliptic curve of genus $g=2^{n-1}-1$ in characteristic two.

Assertion (iib) is a first step towards the general case. We precise this in the next result; actually, when the above coefficients vanish, we determine the first vertex of the generic Newton polygon for the resulting family of curves.

\begin{theorem*}
\label{fv2}
Notations are as above; assume that $g$ is large enough
\begin{itemize}
\item[(i)] when $2^n-1<2g+1<3\cdot2^{n-1}-1$, and $c_{2^n-1}=0$, we have the following possible first vertices for $\NP_q(C_f)$
\begin{itemize}
\item[(a)] if $2^n-1<2g+1<5\cdot2^{n-2}-1$, it is $(2n-2,2)$ if, and only if $c_{2^n-3}c_{3\cdot2^{n-2}-1}\neq 0$;
\item[(b)] if $5\cdot2^{n-2}-1\leq 2g+1<3\cdot2^{n-1}-5$, it is $(2n-2,2)$ if, and only if $c_{2^n-3}^{2^{n-2}}c_{3\cdot2^{n-2}-1}+c_{2^n-5}^{2^{n-2}}c_{5\cdot2^{n-2}-1}\neq 0$;
\item[(c)] if $2g+1=3\cdot2^{n-1}-5$, it is $(3n-3,3)$ if, and only if $c_{2^n-3}c_{3\cdot2^{n-1}-5}c_{5\cdot2^{n-2}-1}\neq 0$;
\item[(d)] if $2g+1=3\cdot2^{n-1}-3$, it is $(3n-3,3)$ if, and only if $c_{5\cdot2^{n-2}-1}(c_{2^n-3}c_{3\cdot2^{n-1}-5}+c_{2^n-5}c_{3\cdot2^{n-1}-3})\neq 0$;
\end{itemize}
\item[(ii)] when $3\cdot2^{n-1}-1\leq 2g+1<2^{n+1}-7$, and $c_{2^n-1}=0$, the first vertex of $\NP_q(C_f)$ is $(2n-1,2)$ if, and only if $c_{2^{n}-3}c_{3\cdot2^{n-1}-1}\neq 0$. Else the first slope is at least $\frac{1}{n-1}$.
\item[(iii)] when $2g+1=2^{n+1}-7$, and $c_{2^n-1}=0$, the first vertex of $\NP_q(C_f)$ is $(2n-1,2)$ if, and only if
$$c_{2^{n+1}-7}^{2^{n-2}}c_{7\cdot2^{n-2}-1}+
c_{2^{n}-3}^{2^{n-1}}c_{3\cdot2^{n-1}-1}\neq 0$$
\item[(iv)] when $2g+1=2^{n+1}-5$, and $c_{2^n-1}=0$, the first vertex of $\NP_q(C_f)$ is $(2n-1,2)$ if, and only if
$$c_{2^{n+1}-5}^{2^{n-2}}c_{5\cdot2^{n-2}-1}+
c_{2^{n+1}-7}^{2^{n-2}}c_{7\cdot2^{n-2}-1}+
c_{2^{n}-3}^{2^{n-1}}c_{3\cdot2^{n-1}-1}\neq 0$$
\item[(v)] when $2g+1=2^{n+1}-3$, and $c_{2^n-1}=c_{3\cdot2^{n-1}-1}=0$, the first vertex of $\NP_q(C_f)$ is $(2n-1,2)$ if, and only if
$$c_{2^{n+1}-3}^{2^{n-2}}c_{3\cdot2^{n-2}-1}+
c_{2^{n+1}-5}^{2^{n-2}}c_{5\cdot2^{n-2}-1}+
c_{2^{n+1}-7}^{2^{n-2}}c_{7\cdot2^{n-2}-1}\neq 0$$
\end{itemize} 
\end{theorem*}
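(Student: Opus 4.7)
The plan is to follow the template of Theorem 1: apply the congruence from \cite{cong}, which expresses the coefficients of $L(f, T) = \sum a_m T^m$ modulo powers of $q$ as explicit polynomials in the $c_{2i+1}$, and then identify which $a_m$ has the required $q$-adic valuation. The assertion that the first vertex is at $(k, h)$ amounts to two claims: $v_q(a_k) = h$ and $v_q(a_j) \geq hj/k$ for every $0 < j < k$, so in each case I must both pin down a coefficient of the correct valuation and verify that no earlier coefficient pulls the polygon below the proposed segment.

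In the Dwork-style congruence of \cite{cong}, a monomial $\prod c_{2i_j+1}^{e_j}$ contributes to $a_m \bmod q^{h+1}$ only when $\sum e_j \leq h$, and its $x$-contribution is governed by the binary digit sums $\sigma_2(2i_j+1)$. A direct check yields $\sigma_2(2^n-1) = \sigma_2(3\cdot 2^{n-1}-1) = \sigma_2(7\cdot 2^{n-2}-1) = n$ while $\sigma_2(2^n-3) = \sigma_2(3\cdot 2^{n-2}-1) = \sigma_2(5\cdot 2^{n-2}-1) = \sigma_2(2^{n+1}-7) = \sigma_2(3\cdot 2^{n-1}-5) = n-1$, so the weights of the specific monomials appearing in the statement add up to exactly the $x$-coordinate of the proposed vertex ($2n-2$, $2n-1$, or $3n-3$), whereas the hypothesis $c_{2^n-1}=0$ (together with $c_{3\cdot 2^{n-1}-1}=0$ in (v)) kills the previously dominant weight-$n$ monomial from Theorem 1.

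For each case, I would then (a) enumerate the admissible multi-indices whose entries are odd, at most $2g+1$, and whose digit-sum weights total the prescribed $x$-coordinate; (b) expand $a_k \bmod q^{h+1}$ into the corresponding $\Fq$-linear combination of monomials in the $c_{2i+1}$, noting that the Frobenius-twisted powers $c_i^{2^{n-2}}$ and $c_i^{2^{n-1}}$ in conditions (i)(b) and (iii)--(v) arise from iterating Frobenius to match certain exponents; (c) check that this combination reduces to a unit multiple of the polynomial stated in the theorem; and (d) verify, again via the same congruence and the vanishing hypotheses, that no $a_j$ with $j < k$ falls below the segment from $(0,0)$ to $(k,h)$. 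The subdivisions (i)(a)--(i)(d) and (ii)--(v) reflect the way the set of admissible exponents $\{2i+1 \leq 2g+1\}$ grows with $g$, incorporating progressively more coefficients into the same weight class.

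The hard part will be the combinatorial bookkeeping: enumerating all admissible multi-indices, tracking which monomials survive the reduction of binomial coefficients modulo $2$ (in characteristic two many potential contributions vanish outright), and reconciling Frobenius twists of low-degree coefficients with genuinely new high-degree ones when both contribute at the same $q$-adic order. This is the ``simple semi-algebra'' referred to in the introduction, but the asymmetric polynomial conditions in (i)(b), (i)(d), and (iii)--(v) make the case separation delicate and account for the bulk of the work.
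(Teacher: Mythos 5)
Your plan correctly identifies the congruence of \cite{cong} as the engine, and your digit-sum computations ($\sigma_2$ of the various exponents) are accurate and do explain the shape of the polynomials in the statement. But as written the proposal misses the two structural inputs that actually drive the paper's proof, and your step (d) is where the argument would stall.

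First, the paper does not verify slope lower bounds on the intermediate coefficients $a_j$ by hand. Instead it invokes Proposition \ref{firstvertex} (Corollary 3.2 of \cite{blabla}): the reduction $\overline{M}(\Gamma)$ of the matrix in the congruence defines a Frobenius-semilinear endomorphism $\varphi$ of $k^N$, and the first vertex of $\NP_q(C_f)$ is $(\dim V_{ss},\,\delta\dim V_{ss})$ where $V_{ss}=\cap_{n\geq 0}\Ima\varphi^n$. This single statement discharges both directions of every ``if and only if'' and automatically guarantees that no $a_j$ with $j<\dim V_{ss}$ drops below the segment; the lower bound is baked into the density $\delta$ of the exponent set. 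Your proposal would instead have to enumerate, for each $j<k$ and each case, every admissible multi-index and show its contribution has high enough valuation --- this is exactly the bounding problem that Proposition~\ref{bou2} and Proposition~\ref{dens22} were built to solve, and it is not ``simple semi-algebra.''

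Second, and more fundamentally, you observe that the hypothesis $c_{2^n-1}=0$ kills the dominant weight-$n$ monomial but do not follow the consequence: once that coefficient vanishes one must delete the corresponding exponent from $D$ and \emph{recompute the $2$-density and the minimal irreducible solutions}. The density jumps from $\frac{1}{n}$ to $\frac{1}{n-1}$, $\frac{2}{2n-1}$, or $\frac{3}{3n-2}$ depending on where $2g+1$ sits, and the list of minimal solutions (Proposition~\ref{dens22} and its Corollary) is what determines the minimal support, the matrix $\overline{M}(\Gamma)$, and hence the candidate first vertex. The case subdivision in the theorem mirrors exactly the thresholds at which new minimal solutions appear, not (as you suggest) merely the thresholds at which new coefficients $c_{2i+1}$ become available. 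Without Proposition~\ref{dens22} you have no way to know that, say, the slope is $\frac{2}{2n-1}$ rather than something smaller in cases (ii)--(v). Finally, the Frobenius powers $c_i^{2^{n-2}}$ and $c_i^{2^{n-1}}$ in the stated polynomials are not generic ``twists to match exponents''; they arise concretely because $\varphi$ is $\sigma$-semilinear and the paper computes $\varphi^{2n-1}(e(1))$ by iterating through a cyclic basis. You should make that mechanism explicit rather than treat it as bookkeeping.
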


Note that this result improves the bounds given in \cite[Theorem 1.3]{sz}.

\begin{remark*}
The first slopes of the segments give a lower bound (which does not depend on $m$) on the $q^m$-adic valuation of the exponential sum $S_m(f)$, when $f$ satisfies the corresponding conditions.
\end{remark*}

Let us briefly present the structure of the paper: in Section \ref{s1}, we recall certain modular equations defined in \cite{mkcs} and some of the invariants associated to their solutions (see \cite{dens}), in particular their supports. Then we give properties of these supports, already studied in \cite{blabla}, but specialized to the case $p=2$. These results allow us to determine the solutions of low density in Section \ref{s2}. With this at hand, we can write explicitely the congruence given in \cite{cong} in some cases; this is done in Section \ref{s3}, and used to show the two theorems above.

\section{The supports of solutions of modular equations}
\label{s1}

In this short section, we rewrite the properties of supports of solutions of the modular equation given in \cite[Section 1]{blabla} in the case $p=2$. In the following, $D$ denotes a non empty subset of the set of positive integers.

For any $\ell\geq 1$, we define the finite set 
$E_{D,p}(\ell)\subset \{0,\ldots,p^\ell-1\}^{|D|}$ as the set of solutions $U=(u_d)_{d\in D}$ of the following system (see \cite{mkcs})
\begin{equation}
\label{mod}
\left\{\begin{array}{rcl}
\sum_D du_d & \equiv & 0 \mod p^\ell-1 \\
\sum_D du_d & > & 0 \\
\end{array}
\right.
\end{equation}

We denote by $s_p(n)$ the $p$-weight of the integer $n$, i.e. the sum of its base $p$ digits. We define the \emph{weight} of a solution as 
$s_p(U):=\sum_D s_p(u_d)$, its \emph{length} as $\ell(U):=\ell$, and its \emph{density} as $\delta(U):=\frac{s_p(U)}{(p-1)\ell(U)}$.

We set $\sigma_{D,p}(\ell):=\min\{s_p(U),~U\in E_{D,p}(\ell)\}$. In \cite{dens}, we have shown that the infimum
$$\inf_{\ell\geq 1} \left\{ \frac{\sigma_{D,p}(\ell)}{\ell(p-1)}\right\}$$
is actually a minimum $\delta_{D,p}$, the \emph{$p$-density} of the set $D$.

\begin{definition}
A solution $U\in E_{D,p}(\ell)$ is \emph{minimal} when we have $\delta(U)=\delta_{D,p}$.
\end{definition}

We define the \emph{shift} as the map $\delta$ from $\{0,\ldots,p^\ell-1\}$ to itself leaving $p^\ell-1$ fixed, and sending any other $i$ to the 
remainder of $pi$ modulo $p^\ell-1$ (note that this map shifts the base $p$ digits). We extend it coordinatewise to the set 
$\{0,\ldots,p^\ell-1\}^{|D|}$; then it leaves the subset $E_{D,p}(\ell)$ stable. As a consequence, all integers $\sum_D d\delta^k(u_d)$, $0\leq k\leq \ell-1$, are positive multiples of $p^\ell-1$. 

\begin{definition}
The \emph{support} of the solution $U$ is the map $\varphi_U$ from $\Z/\ell\Z$ to $\N_{>0}$ 
defined by 
$$\varphi_U(k):=\frac{1}{p^\ell -1} \sum_D d\delta^k(u_d)$$
A solution $U$ is \emph{irreducible} when the map $\varphi_U$ is an injection.
\end{definition}

For any $d\in D$ we write the base $p$ expansion $u_d=\sum_{r=0}^{\ell-1} p^ru_{dr}$; note that we have $s_p(U)=\sum_D\sum_{r=0}^{\ell-1}u_{dr}$. Recall from \cite[Lemma 1.2 (ii)]{dens} that for any 
$0\leq r\leq \ell-1$, we have the equalities

\begin{equation}
\label{saut}
\sum_D du_{dr}= p\varphi_U(\ell-r-1)-\varphi_U(\ell-r)
\end{equation}

Let us define a certain type of maps as in \cite[Section 1.2]{blabla}.

\begin{definition}
Let $\ell\geq s$ denote two integers, and $\varphi : \Z/\ell\Z\rightarrow \N_{>0}$ any map 

\begin{itemize}
	\item[(i)] We say that $\varphi$ is a \emph{support map of length $\ell$ with $s$ jumps} if we have $\varphi(i+1)=p\varphi(i)$ except for exactly 
$s$ pairwise distinct values $i_1,\ldots,i_s\in \Z/\ell\Z$, for which we have $\varphi(i+1)<p\varphi(i)$.
\item[(ii)] We say that $\varphi$ is \emph{irreducible} when $\varphi$ is an injection.
\end{itemize}

\end{definition}

We give the link between the supports of solutions of modular equations, and the maps we have just defined. The following is the special case $p=2$ of \cite[Proposition 1.11]{blabla}

\begin{lemma}
\label{supp}
Let $U$ be a solution of the system (\ref{mod}) associated to $D$ and $p=2$, with weight $w$ and length $\ell$. Then
\begin{itemize}
	\item[(i)] its support $\varphi_U$ is a support map of length $\ell$, with at most $w$ jumps; moreover it is irreducible if, and only if
	the solution $U$ is;
	\item[(ii)] if the support $\varphi_U$ has $s$ jumps, then we have the following inequality
	$$  \max \varphi_U\leq  (w-s+1)\max D $$
	\item[(iii)] if it has exactly $w$ jumps, then all are elements of $D$. Moreover, the solution $U$ is completely determined by its support in this case, and we have $u_{dr}\in \{0,1\}$ for any $d,r$.
\end{itemize}
\end{lemma}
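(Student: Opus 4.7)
The plan is to exploit formula \eqref{saut}, which for $p=2$ is the backbone: re-indexing by $i=\ell-r-1$ it becomes the cyclic recurrence
$$\varphi_U(i+1) = 2\varphi_U(i) - J_i, \qquad J_i := \sum_D d\, u_{d,\ell-i-1} \geq 0,$$
with $J_i>0$ precisely when at least one binary digit $u_{d,\ell-i-1}$ equals $1$. Part (i) then follows almost directly: the positions where $\varphi_U$ fails to double are exactly the jump indices, their number is bounded by $w=\sum_{d,r}u_{dr}$ since each jump requires at least one $1$-digit, and the two senses of ``irreducible'' literally coincide (both mean $\varphi_U$ is injective).

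For (ii) I would iterate the recurrence around $\Z/\ell\Z$ and use $\varphi_U(i+\ell)=\varphi_U(i)$ to derive the closed form
$$(2^\ell-1)\,\varphi_U(i) \;=\; \sum_{k=1}^{s} 2^{(i-j_k-1)\bmod \ell}\, J_k,$$
where $j_1,\ldots,j_s$ are the jump positions and $J_k := J_{j_k}$. Because $\varphi_U$ strictly grows at every non-jump, its maximum sits at some $i^*=j_{k_0}$, and then the $s$ exponents appearing above are $s$ \emph{distinct} elements of $\{0,\ldots,\ell-1\}$. The crucial elementary step is the inequality $\sum_{k}2^{(i^*-j_k-1)\bmod\ell}\leq 2^\ell-1$; factoring it out gives $\max\varphi_U\leq\max_k J_k$. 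Now $J_k=\sum_{d\in D_k}d\leq |D_k|\max D$ with $D_k:=\{d:u_{d,\ell-j_k-1}=1\}$, and since the positions $\ell-j_k-1$ are distinct, $\sum_k|D_k|=w$ while $|D_k|\geq 1$ at each jump; hence $|D_k|\leq w-s+1$ and part (ii) follows.

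Part (iii) is then a matter of pushing these counts to equality. When $s=w$, the bound $|D_k|\leq 1$ combined with $|D_k|\geq 1$ forces $|D_k|=1$ at every jump, so each $J_k$ equals a single element $d_k\in D$. The support therefore determines both the jump positions and the attached element $d_k\in D$, and $U$ is recovered by placing a single $1$-digit at the position $(d_k,\ell-j_k-1)$ for each $k$; the condition $u_{dr}\in\{0,1\}$ is automatic in the $p=2$ setting.

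The hard part I expect is the derivation of the closed-form expression for $\varphi_U$ on the cycle and, more subtly, recognising the sharp bound $\max\varphi_U\leq\max_k J_k$: a naive bound by $\sum_k J_k$ is off by a factor of order $s$, and the trick is to see that the coefficients of the $J_k$ at the maximum form a set of \emph{distinct} powers of two, summing to at most $2^\ell-1$, which exactly cancels the denominator.
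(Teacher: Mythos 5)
The paper does not prove this lemma---it simply cites it as the $p=2$ specialization of \cite[Proposition 1.11]{blabla}---so there is no in-text argument to compare against; your proof is a correct and self-contained alternative. Rewriting (\ref{saut}) as the cyclic recurrence $\varphi_U(i+1)=2\varphi_U(i)-J_i$ with $J_i=\sum_D d\,u_{d,\ell-i-1}\geq 0$ handles (i) cleanly: the jump indices are exactly those with $J_i>0$, each forcing at least one $1$-digit in column $\ell-i-1$, and both notions of irreducibility reduce to injectivity of $\varphi_U$. For (ii), iterating around $\Z/\ell\Z$ to obtain the closed form $(2^\ell-1)\varphi_U(i)=\sum_{k=1}^s 2^{(i-j_k-1)\bmod\ell}J_{j_k}$ and noting the $s$ exponents are pairwise distinct is indeed the crucial step: the bound $\sum_k 2^{e_k}\leq 2^\ell-1$ then gives $\max\varphi_U\leq\max_k J_{j_k}$, and pigeonholing the $w$ unit digits into the $s$ nonempty jump columns gives $J_{j_k}=\sum_{d\in D_k}d\leq|D_k|\max D\leq(w-s+1)\max D$. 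For (iii), $s=w$ forces $|D_k|=1$ for every $k$, so each jump value $J_{j_k}$ is a single $d_k\in D$, the digits $u_{d_k,\ell-j_k-1}=1$ are reconstructed from $\varphi_U$, and $u_{dr}\in\{0,1\}$ is automatic for $p=2$. One small remark: anchoring at a jump index $i^*$ where the maximum is attained is harmless but unnecessary, since the distinct-exponent bound holds for every $i$; the argument goes through verbatim without that observation.
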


We end this section with the case $p=2$ of \cite[Proposition 1.9]{blabla}; it will be very useful in the next section, in order to give an upper bound for the lengths of solutions having low density.

\begin{proposition}
\label{bou2}
Let $\varphi$ denote an irreducible support map of length $\ell$ with $t$ jumps, $t\leq s<\ell$. Write $\ell=qs+r$ with $1\leq r\leq s$ and 
$q\geq 1$; then we have 
$$|\varphi|=\sum_{i=0}^{\ell-1} \varphi(i) \geq \frac{1}{2}s(s+1) + (2^{q-1}-1)\frac{1}{2}\left(3s^2+s\right)+2^{q-2}r(2s+r+1)$$
\end{proposition}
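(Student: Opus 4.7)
The plan is to express $|\varphi|$ as the sum of $\ell$ distinct positive integers arising from a doubling decomposition, and to lower-bound it by an explicit combinatorial minimum. First decompose $\Z/\ell\Z$ into the $t\leq s$ arcs delimited by the jumps of $\varphi$; on the $j$-th arc of length $a_j$ (with $\sum_j a_j=\ell$) the values of $\varphi$ form a geometric progression $v_j,2v_j,\ldots,2^{a_j-1}v_j$, and irreducibility forces the $\ell$ values to be pairwise distinct.

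Now group the values by their odd part. For each odd $o$, let $E_o\subset\N$ be the set of exponents $e$ with $o\cdot 2^e$ appearing among the $\varphi(i)$. Then $E_o$ is a disjoint union of intervals (one per arc having odd part $o$) and the total number of intervals is $t\leq s$; in particular $|\{o:E_o\neq\emptyset\}|\leq s$. Setting $n_o=|E_o|$ so that $\sum_o n_o=\ell$, and using the elementary inequality $\sum_{e\in E}2^e\geq 2^{|E|}-1$ for any finite $E\subset\N$, one obtains
\[
|\varphi|=\sum_o o\sum_{e\in E_o}2^e \;\geq\; \sum_o o\,(2^{n_o}-1).
\]
A standard exchange argument (transferring the mass of a used odd part $o>2s-1$ to an unused one in $\{1,3,\ldots,2s-1\}$ strictly decreases the sum) shows this right-hand side is minimized with $(n_o)$ supported in $\{1,3,\ldots,2s-1\}$. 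The resulting minimum coincides with the quantity $T(s,\ell)$, defined as the sum of the $\ell$ smallest positive integers of odd part $\leq 2s-1$: each admissible configuration $(n_o)$ produces a prefix-union $V=\bigcup_{i=1}^s \{(2i-1)2^e : 0\leq e<n_i\}$ of size $\ell$, which is a subset of the admissible integers and hence has sum $\geq \sum W$, where $W$ denotes the $\ell$ smallest admissibles (itself a prefix-union since it is downward closed in each column).

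It remains to evaluate $T(s,\ell)$. For $\ell\leq 2s$ we have $T(s,\ell)=\ell(\ell+1)/2$ since all of $\{1,\ldots,2s\}$ are admissible. For $\ell\geq 2s+1$, the map $n\mapsto 2n$ is a bijection from the admissible set $S$ onto its even elements, so if $\mathrm{val}(p)$ denotes the $p$-th smallest element of $S$, one has $\mathrm{val}(p+s)=2\,\mathrm{val}(p)$ for $p\geq s$ (from position $s+1$ onwards in $2S$, all $s$ odd elements of $S$ already lie below, which gives the shift by $s$). Summing this identity yields
\[
T(s,\ell)=s^{2}+2\,T(s,\ell-s),\qquad\ell\geq 2s+1,
\]
which iterates for $\ell=qs+r$ ($q\geq 1$, $1\leq r\leq s$) to the closed form
\[
T(s,qs+r)=(2^{q-1}-1)\,s^{2}+2^{q-2}\,(s+r)(s+r+1).
\]
A direct expansion identifies this with the right-hand side of the proposition, completing the proof.

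The main obstacle will be the reduction step in the second paragraph, which requires carefully handling configurations where the same odd part yields several exponent intervals and ruling out odd parts exceeding $2s-1$; the final algebraic matching is routine but needs some bookkeeping to split the iterated formula into the three terms of the stated bound.
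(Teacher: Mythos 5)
Your proof is correct, and it is more self-contained than the paper's. The paper outsources the central inequality $|\varphi| \geq \sum_{i=1}^{\ell} c_i$ (where $c_i=i$ for $i<2s$ and $c_i=2c_{i-s}$ for $i\geq 2s$) to \cite[Lemma 1.7, Prop. 1.9, Lemma 1.10]{blabla}, and then merely evaluates $\sum c_i$ by splitting the index range into blocks of length $s$ and using $c_{i+s}=2c_i$ block-by-block. You derive the same lower bound from scratch: the arc decomposition, the grouping by odd parts, the bound $\sum_{e\in E}2^e\geq 2^{|E|}-1$, the exchange argument pushing all used odd parts into $\{1,3,\dots,2s-1\}$, and the observation that any resulting prefix-union of size $\ell$ is a subset of the admissible integers and hence dominates the $\ell$ smallest of them. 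That quantity $T(s,\ell)$ is exactly $\sum_{i=1}^{\ell}c_i$; your recursion $T(s,\ell)=s^2+2T(s,\ell-s)$ with base case $T(s,s+r)=\tfrac{1}{2}(s+r)(s+r+1)$ is equivalent to the paper's block summation, and your closed form $(2^{q-1}-1)s^2+2^{q-2}(s+r)(s+r+1)$ algebraically equals the stated bound $\tfrac12 s(s+1)+(2^{q-1}-1)\tfrac12(3s^2+s)+2^{q-2}r(2s+r+1)$. So the combinatorial core (comparison with the $\ell$ smallest positive integers whose odd part is at most $2s-1$) is the same; what you add is an explicit proof of the lower bound that the paper takes as given from its companion preprint. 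One cosmetic remark: the exchange step implicitly uses that the configuration space is finite so a minimum is attained, and that after the swap the count of used odd parts does not increase; both are true but worth stating.
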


\begin{proof}
First assume $t=s$. As in \cite[Lemma 1.7]{blabla}, we define the sequence $(c_i)$ by $c_i=i$ for $1\leq i< 2s$, and $c_i=2c_{i-s}$ for $i\geq 2s$. As in the proof of \cite[Proposition 1.9]{blabla}, the irreducibility of $\varphi$ guarantees $|\varphi|\geq \sum_{i=1}^\ell c_i$. Now we have

$$ \sum_{i=1}^s c_i=\frac{1}{2}s(s+1),~ \sum_{i=ks+1}^{(k+1)s} c_i=2^{k-1}\sum_{i=s+1}^{2s} c_i=2^{t-1}\frac{1}{2}\left(3s^2+s\right)$$
for any positive integer $k$. Finally we have 
$$\sum_{i=qs+1}^{qs+r} c_i=2^{q-1}\sum_{i=s+1}^{s+r} c_i=2^{q-1}\frac{1}{2}\left((s+r+1)(s+r)-s(s+1)\right),$$
and this gives the result.

If $t<s$, the bound $|\varphi|\geq \sum_{i=1}^\ell c_i$ remains valid (see \cite[proof of Lemma 1.10]{blabla}), and we conclude as above.
\end{proof}

\section{Solutions of the modular equation having low density}
\label{s2}

We fix an integer $n\geq 3$ in the following, and we set $D:=\{1\leq i\leq 2^{n+1}-3,~i\equiv 1\mod 2\}$. We determine all irreducible solutions of the modular equations associated to $D$ and $p=2$ and having density in the interval $\left[\frac{1}{n},\frac{1}{n-1}\right]$.

All along this section, $U$ denotes such a solution, with length 
$\ell$ and weight $w$. We must have $\frac{1}{n}\leq\frac{w}{\ell}\leq\frac{1}{n-1}$, and $(n-1)w\leq\ell\leq nw$. Thus we have $\ell=(n-1)w+r$ for some $0\leq r\leq w$.

\subsection{Properties of the support}

We begin with a bound for the weight of such a solution $U$

\begin{lemma}
\label{weight}
Let $U$ be as above; then the couple $(w,r)$ lies in the following set
$$\left\{ (i,0),~ 1\leq i\leq 5,~ (1,1),(2,2),(2,1),(3,1),(4,1) \right\}$$
\end{lemma}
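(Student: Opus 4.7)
The plan is to combine an arithmetic upper bound on $|\varphi_U|:=\sum_{i=0}^{\ell-1}\varphi_U(i)$ with the lower bound of Proposition~\ref{bou2}; the resulting inequality rules out all but the listed pairs, and one boundary case requires a separate structural argument.

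For the upper bound, I would sum the defining identity $\varphi_U(i)=\frac{1}{2^\ell-1}\sum_{d\in D}d\,\delta^i(u_d)$ over $i\in\Z/\ell\Z$. Since $\delta$ cyclically permutes the base-$2$ digits of $u_d$, the orbit sum equals $\sum_{i=0}^{\ell-1}\delta^i(u_d)=s_2(u_d)(2^\ell-1)$, yielding
$$|\varphi_U|=\sum_{d\in D}d\,s_2(u_d)\leq w\cdot\max D=w(2^{n+1}-3),$$
using $\sum_d s_2(u_d)=w$. On the other hand, Lemma~\ref{supp}(i) guarantees that the support has at most $w$ jumps, so Proposition~\ref{bou2} applies with $s=w$. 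Writing $\ell=qw+r'$ with $1\leq r'\leq w$, the identity $\ell=(n-1)w+r$ forces $(q,r')=(n-1,r)$ when $r\geq 1$ and $(q,r')=(n-2,w)$ when $r=0$.

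After substitution and collection of the $2^{n-3}$-terms, the resulting inequality reads, for $r\geq 1$,
$$(15w-3w^2-2wr-r^2-r)\,2^{n-3}+(w^2-3w)\geq 0,$$
with a completely analogous expression when $r=0$. A case-by-case check shows this fails for every $(w,r)$ outside the stated set, except for the single pair $(w,r)=(3,2)$ at which the coefficient of $2^{n-3}$ and the constant term both vanish, giving equality.

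The hard part is therefore to eliminate $(w,r)=(3,2)$. I would argue as follows: equality in the upper bound forces $u_d=0$ for every $d<\max D$ together with $s_2(u_{\max D})=3$, while equality in Proposition~\ref{bou2} forces the image of $\varphi_U$ to be exactly the set $\{c_1,\ldots,c_{3n-1}\}$ built in its proof, which decomposes uniquely into three doubling chains $\{2^k\}_{0\leq k\leq n}$, $\{3\cdot 2^k\}_{0\leq k\leq n-2}$, $\{5\cdot 2^k\}_{0\leq k\leq n-2}$ of respective lengths $n+1$, $n-1$, $n-1$. The support then has exactly $t=w=3$ jumps, so Lemma~\ref{supp}(iii) requires each of the three inter-chain transitions to lie in $D$. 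However, in each of the two possible cyclic orderings of the chains, a direct computation produces a transition of magnitude $5\cdot 2^{n-1}-1$ or $5\cdot 2^{n-1}-3$, strictly larger than $\max D=2^{n+1}-3=4\cdot 2^{n-1}-3$. This contradiction eliminates $(3,2)$ and concludes the proof.
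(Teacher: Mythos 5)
Your proof reaches the same conclusion and follows the paper's overall strategy up to the point of excluding $(w,r)=(3,2)$: you combine the identity $|\varphi_U|=\sum_{d\in D}d\,s_2(u_d)\leq w\max D$ (which is precisely the bound from \cite[Lemma 1.2]{dens} that the paper invokes) with Proposition~\ref{bou2}, and your rearranged form $(15w-3w^2-2wr-r^2-r)2^{n-3}+(w^2-3w)\geq 0$ is algebraically equivalent to the paper's inequality (2.4); the resulting case check agrees.

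Where you genuinely diverge is the elimination of $(3,2)$. The paper exploits equality in the \emph{upper} bound: it deduces $u_d=0$ for $d<\max D$, reduces to a single congruence $u(2^{n+1}-3)\equiv 0\pmod{2^{3n-1}-1}$ with $\sigma_2(u)=3$, and rules it out by a $\gcd$ computation (distinguishing $n\equiv 7\pmod{10}$) together with \cite[Prop.~11(iv)]{mkcs}. You instead exploit equality in the \emph{lower} bound: you argue that the image of $\varphi_U$ must be exactly $\{c_1,\ldots,c_{3n-1}\}=\{2^k\}_{0\leq k\leq n}\cup\{3\cdot2^k\}_{0\leq k\leq n-2}\cup\{5\cdot2^k\}_{0\leq k\leq n-2}$, that the three maximal doubling chains force exactly three jumps, and that in either cyclic ordering one of the jumps ($5\cdot 2^{n-1}-1$ or $5\cdot 2^{n-1}-3$) exceeds $\max D=4\cdot 2^{n-1}-3$, contradicting Lemma~\ref{supp}(iii). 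Both routes are correct; yours is arguably cleaner and self-contained since it avoids the external $2$-adic weight estimate and the $\gcd$ case split. One point worth making explicit if you were to write this out fully: the step ``equality in Proposition~\ref{bou2} forces the image to equal $\{c_i\}$'' relies on the proof of that proposition proceeding by a term-by-term majorization of the sorted values of $\varphi$ against the sorted $c_i$; the paper only quotes the aggregate inequality $|\varphi|\geq\sum c_i$, so this refinement is an (entirely plausible) unpacking of the cited preprint rather than something stated here. Likewise, the observation that three pairwise-unmergeable chains force $t\geq 3$ (hence $t=3$) deserves a sentence.
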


\begin{proof}
We apply Proposition \ref{bou2} to the support of $U$ (it has at most $w$ jumps from Lemma \ref{supp} (i)), and we use the bound from \cite[Lemma 1.2]{dens}, to obtain
\begin{equation}
\label{bound22}
\frac{1}{2}w(w+1) + (2^{n-2}-1)\frac{1}{2}\left(3w^2+w\right)+2^{n-2}\frac{1}{2}r(r+2w+1)\leq w(2^{n+1}-3)
\end{equation}
Taking $r=0$, and symplifying by $w$, we get $(3\cdot 2^{n-2}-2)w\leq 15\cdot 2^{n-2}-6$, and $w\leq 5$ 
as long as $n\geq 3$. Thus $(w,r)$ must lie in $\{(i,j),~ 1\leq i\leq 5,~ 0\leq j\leq i\}$. Checking by hand the inequality (\ref{bound22}), 
the only possible cases are the $(i,0)$, $1\leq i\leq 5$, and $(1,1),(2,2),(2,1),(3,1),(3,2),(4,1)$. 

With a little more work, we can also exclude the case $(w,r)=(3,2)$. In this case, we get an equality in (\ref{bound22}); as a consequence, we must 
have equalities in both Proposition \ref{bou2} and \cite[Lemma 1.2]{dens}. Equality in \cite[Lemma 1.2]{dens} implies that $u_d=0$ except for 
$d=2^{n+1}-3$; thus we are looking for a solution of the form $u\cdot(2^{n+1}-3)\equiv 0 \mod 2^{3n-1}-1$ and $\sigma_2(u)=3$. We have that 
$\gcd(2^{n+1}-3,2^{3n-1}-1)=11$ when $n\equiv 7 \mod 10$, and $1$ else. In the first case, we have $11 u\equiv 0 \mod 2^{3n-1}-1$, and 
$\sigma_2(u)\geq n=\left\lceil\frac{3n-1}{\sigma_2(11)}\right\rceil$, a contradiction; in the second, we get $u\equiv 0 \mod 2^{3n-1}-1$ and 
$\sigma_2(u)\geq 3n-1$ from \cite[Proposition 11 (iv)]{mkcs}.

\end{proof}

Now we have a bound for the weight of a low density solution, we derive results about its support in the next lemmas.

\begin{lemma}
\label{jumps}
Let $U$ be as above; if $n$ is large enough, the support $\varphi_U$ has exactly $w$ jumps.
\end{lemma}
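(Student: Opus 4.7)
I would argue by contradiction, comparing two bounds on $|\varphi_U|$ already furnished by the earlier material. By Lemma~\ref{supp}(i) the support $\varphi_U$ is irreducible with at most $w$ jumps, and it must have at least one (since $\varphi_U$ takes positive values and is $\ell$-periodic, so cannot everywhere satisfy $\varphi(i+1)=2\varphi(i)$); the case $w=1$ from Lemma~\ref{weight} (i.e.\ $(w,r)\in\{(1,0),(1,1)\}$) is then trivial, so I assume $w\ge 2$ and seek to rule out $t\le w-1$ jumps.

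For the upper bound, summing the identity $\sum_{k=0}^{\ell-1}\delta^k(u_d)=s_2(u_d)(2^\ell-1)$ against $d$ shows $|\varphi_U|=\sum_D d\,s_2(u_d)\le w\cdot\max D=w(2^{n+1}-3)$, exactly the estimate used in the proof of Lemma~\ref{weight}. For the lower bound, I would apply Proposition~\ref{bou2} to $\varphi_U$ with parameter $s=t$: writing $\ell=qt+r'$ with $1\le r'\le t$ and $q\ge 1$, the middle term alone gives
$$|\varphi_U|\ \ge\ (2^{q-1}-1)\tfrac{1}{2}(3t^2+t)\ \ge\ 2^{q-2}(3t^2+t).$$

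The crucial numerical step is then to bound $q$ from below using Lemma~\ref{weight}: since $\ell\ge (n-1)w$ and $t\le w-1\le 4$,
$$q\ \ge\ \frac{\ell}{t}-1\ \ge\ \frac{(n-1)w}{w-1}-1\ \ge\ \frac{5(n-1)}{4}-1,$$
the last inequality because $w/(w-1)$ is minimized over $w\in\{2,3,4,5\}$ at $w=5$. Thus the lower bound on $|\varphi_U|$ grows at least like $2^{5n/4}$, while the upper bound $w(2^{n+1}-3)$ is only $O(2^{n})$; the difference of growth rates yields the contradiction as soon as $n$ is sufficiently large, forcing $t=w$.

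The main (small) obstacle is simply checking that the constants and the condition $q\ge 1$ line up correctly for every admissible pair $(w,r)$ from Lemma~\ref{weight}; this is immediate since $\ell\ge 2(n-1)>4\ge t$ for $n\ge 4$, and the exponent $5(n-1)/4$ strictly exceeds $n+1$ once $n$ is, say, past the low teens. I do not foresee a genuinely hard step: the lemma is really a quantitative corollary of Proposition~\ref{bou2} combined with the absolute bound $w\le 5$ already established in Lemma~\ref{weight}.
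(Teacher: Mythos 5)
Your proposal is correct in outline but takes a genuinely different route from the paper, and it contains one small arithmetic slip. The paper's own proof is more elementary: it uses only the bound $|\varphi_U|=\sum_i n_i(2^{\ell_i}-1)\leq w(2^{n+1}-3)<2^{n+4}-1$ (from \cite[Lemma 1.2]{dens}), which since $n_i\geq 1$ gives the per-block estimate $\ell_i\leq n+3$, and then compares $(n-1)w\leq\ell=\sum\ell_i\leq s(n+3)$ to force $n\leq 4w-3$ whenever $s\leq w-1$. You instead invoke the full strength of Proposition~\ref{bou2}: you compare the same upper bound $w(2^{n+1}-3)$ to the exponential lower bound on $|\varphi_U|$, using $q\geq\ell/t-1\geq\frac{5(n-1)}{4}-1$, so the lower bound grows like $2^{5n/4}$ while the upper bound is $O(2^n)$. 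Both arguments work, but the paper's is shorter and yields a better explicit threshold ($n\geq 18$, coming from $4w-3\leq 17$) than yours (roughly $n\gtrsim 30$), whereas yours showcases Proposition~\ref{bou2} more directly, in the same style already used in the proof of Lemma~\ref{weight}.

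One correction: the step
$$(2^{q-1}-1)\tfrac{1}{2}(3t^2+t)\ \geq\ 2^{q-2}(3t^2+t)$$
is false, since $(2^{q-1}-1)\tfrac{1}{2}=2^{q-2}-\tfrac{1}{2}<2^{q-2}$. You should instead write, say, $(2^{q-1}-1)\tfrac{1}{2}(3t^2+t)\geq 2^{q-3}(3t^2+t)$ for $q\geq 2$ (which holds for $n$ large). This changes the explicit constants but not the exponential growth rate, so the contradiction still goes through for $n$ large; nevertheless the inequality as written should be fixed. A second, minor, point: it is worth stating explicitly that the lower bound $t\geq 1$ follows because $\varphi_U$ is $\ell$-periodic and positive, so $\varphi_U(0)=2^\ell\varphi_U(0)$ is impossible if there are no jumps; you gesture at this correctly.
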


\begin{proof}
We assume the support has $s$ jumps, and we write it (up to shift)
$$n_1,\ldots,2^{\ell_1-1}n_1,\ldots,n_s,\ldots,2^{\ell_s-1}n_s$$
From the inequality in \cite[Lemma 1.2]{dens}, we have 
$$\sum_{i=1}^s n_i(2^{\ell_i}-1)\leq w(2^{n+1}-3)\leq 5(2^{n+1}-3)<2^{n+4}-1$$
and we get $\ell_i\leq n+3$ for all $i$. As a consequence, we get the following inequality for the lengths $(n-1)w\leq \ell=\sum \ell_i\leq s(n+3)$. If we have $s\leq w-1$, the inequality becomes $n\leq 4w-3$, and this is the desired result.
\end{proof}

\begin{lemma}
\label{cases}
Let $U$ denote a solution of length $\ell$ and weight $w$ for the modular equation. Assume that its support has $w$ jumps, and write it as above. Assume moreover that $n_k>2^u$ for some positive integer $u$; then at least one of the following assertions hold
\begin{itemize}
\item[(i)] $\ell_k\leq n-u$;
\item[(ii)] $\ell_k+\ell_{k+1}\leq n+1$
%\item[(iii)] $\ell_k+\ell_{k+1}+\ell_{k+2}\leq 2n+1-u$.
\end{itemize}
\end{lemma}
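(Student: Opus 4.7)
The plan is to exploit the rigid structure forced by having exactly $w$ jumps. By Lemma \ref{supp}(iii) we then know $u_{dr}\in\{0,1\}$ and the total number of nonzero digits is $w$, matching the number of jumps. So at each jump position there is exactly one $d_k\in D$ with the corresponding digit equal to $1$. Writing (\ref{saut}) at the $k$-th jump, where $\varphi_U$ falls from $2^{\ell_k-1}n_k$ to $n_{k+1}$, yields the fundamental relation
\begin{equation*}
2^{\ell_k}n_k = n_{k+1}+d_k,\qquad d_k\in D,
\end{equation*}
so in particular $d_k\leq 2^{n+1}-3$. Combined with the bound $\max\varphi_U\leq (w-s+1)\max D=2^{n+1}-3$ from Lemma \ref{supp}(ii), this gives two numerical constraints: $d_k\leq 2^{n+1}-3$ and $2^{\ell_{k+1}-1}n_{k+1}\leq 2^{n+1}-3$.

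Next I would use the hypothesis $n_k>2^u$. Since $n_k\in D$ is odd and $u\geq 1$ makes $2^u$ even, we actually have $n_k\geq 2^u+1$. The proof then reduces to a two-case analysis according to the size of $\ell_k$. If $\ell_k\leq n-u$, assertion (i) holds and we are done. Otherwise assume $\ell_k\geq n-u+1$; I will show (ii).

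The key computation: from $\ell_k\geq n-u+1$ and $n_k\geq 2^u+1$ one gets
\begin{equation*}
2^{\ell_k}n_k \geq 2^{n-u+1}(2^u+1) = 2^{n+1}+2^{n-u+1}.
\end{equation*}
Combined with $2^{\ell_k}n_k=n_{k+1}+d_k$ and $d_k\leq 2^{n+1}-3$, this forces $n_{k+1}\geq 2^{n-u+1}+3>2^{n-u+1}$. Feeding this into $2^{\ell_{k+1}-1}n_{k+1}\leq 2^{n+1}-3<2^{n+1}$ yields $\ell_{k+1}-1+(n-u+1)<n+1$, i.e.\ $\ell_{k+1}\leq u$. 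To close the argument I still need $\ell_k\leq n-u+1$; this comes from the same jump equation by noting that $\ell_k\geq n-u+2$ would give $2^{\ell_k}n_k\geq 2^{n+2}+2^{n-u+2}>2^{n+2}$, whereas $n_{k+1}+d_k\leq 2(2^{n+1}-3)<2^{n+2}$, a contradiction. Putting these together, $\ell_k+\ell_{k+1}\leq (n-u+1)+u=n+1$, which is (ii).

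The main obstacle — really a bookkeeping issue rather than a deep one — is correctly identifying the jump equation $2^{\ell_k}n_k=n_{k+1}+d_k$ from (\ref{saut}) with the correct index $r$, and justifying why exactly one digit $u_{d_k r}$ is nonzero at that level. Everything else is a tight but routine two-sided estimate using the single bound $2^{n+1}-3$ on both $d_k$ and the values of $\varphi_U$.
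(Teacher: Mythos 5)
Your proof is correct and follows essentially the same route as the paper's: both arguments use that each jump is an element of $D$ (Lemma~\ref{supp}(iii)), derive $\ell_k\leq n-u+1$ from the size bounds, and then, in the critical case $\ell_k=n-u+1$, lower-bound $n_{k+1}$ via the jump equation to force $\ell_{k+1}\leq u$. One small slip: you justify $n_k\geq 2^u+1$ by saying ``$n_k\in D$ is odd,'' but $n_k$ is a value of the support, not a priori in $D$; fortunately $n_k\geq 2^u+1$ already follows from $n_k>2^u$ and integrality, so the conclusion stands.
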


\begin{proof}
First note that since the support has $w$ jumps, we have the bound $2^{\ell_k-1}n_k\leq 2^{n+1}-3$ from Lemma \ref{supp} (ii). As a consequence, we have $\ell_k\leq n-u+1$.

Assume we have $\ell_k= n-u+1$ (i.e. assertion (i) does not hold). We must have $2^{\ell_k}n_k-n_{k+1}\in D$ since the support has $w$ jumps; as $n_k\geq 2^u+1$, we get the bound $n_{k+1}\geq 2^{n-u+1}+3$. The inequality $2^{\ell_{k+1}-1}n_{k+1}\leq 2^{n+1}-3$ guarantees $\ell_{k+1}\leq u$.

\end{proof}

\begin{lemma}
\label{length}
Let $U$ be as above, with weight $w\geq 2$ and support as above; assume moreover that $n$ is large enough. 
\begin{itemize}
\item[(i)] We have $\ell_k\leq n+1$ for all $1\leq k\leq w$
\item[(ii)] all $n_k$ are odd integers
\item[(iii)] If we have $n_k>2^u$ for some positive integer $u$, then we have the inequality $\ell_k\leq n-u$.
\item[(iv)] If we have $\ell_i\geq n$ for some $i$, then $n_i=1$, and all other lengths satisfy $\ell_j\leq n-1$. Moreover, there exists at most one $j$ such that $\ell_j=n-1$, and in this case $n_j=3$.
\end{itemize}
\end{lemma}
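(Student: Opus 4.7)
The plan is to prove the four parts sequentially, each building on its predecessors and on the lemmas already established. For \textbf{(i)}, I invoke Lemma \ref{jumps} so that the support has exactly $w$ jumps, then apply Lemma \ref{supp}(ii) with $s = w$, obtaining $\max \varphi_U \leq \max D = 2^{n+1}-3$; since $2^{\ell_k-1} \leq 2^{\ell_k-1} n_k \leq 2^{n+1}-3 < 2^{n+1}$, this yields $\ell_k \leq n+1$. For \textbf{(ii)}, Lemma \ref{supp}(iii) tells us that each of the $w$ jump values lies in $D$, so is odd; the jump at the end of the $k$-th segment is $2^{\ell_k} n_k - n_{k+1}$ (indices taken cyclically), and since $2^{\ell_k} n_k$ is even, oddness forces $n_{k+1}$ to be odd.

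For \textbf{(iii)}, suppose $n_k > 2^u$. The size bound from Lemma \ref{supp}(ii), namely $2^{\ell_k-1}(2^u+1) \leq 2^{n+1}-3$, already gives $\ell_k \leq n-u+1$, so it suffices to rule out equality. Under $\ell_k = n-u+1$, the first alternative of Lemma \ref{cases} fails, so its second alternative must hold: $\ell_k + \ell_{k+1} \leq n+1$. Bounding the remaining $w-2$ segments via (i) then gives $\ell = \sum_j \ell_j \leq (n+1) + (w-2)(n+1) = (w-1)(n+1)$, while the low-density hypothesis forces $\ell \geq (n-1)w$. Combining these yields $(n-1)w \leq (w-1)(n+1)$, i.e., $n+1 \leq 2w$, which contradicts $w \leq 5$ (Lemma \ref{weight}) for $n$ large enough.

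For \textbf{(iv)}, assume $\ell_i \geq n$; part (i) forces $\ell_i \in \{n, n+1\}$, and $2^{\ell_i-1} n_i \leq 2^{n+1}-3$ gives $n_i < 4$, so (ii) leaves $n_i \in \{1, 3\}$. If $n_i = 3$, then (iii) applied with $u = 1$ would impose $\ell_i \leq n-1$, a contradiction; hence $n_i = 1$. A second index $j \neq i$ with $\ell_j \geq n$ would likewise satisfy $n_j = 1$, making $\varphi_U$ take the value $1$ at two distinct indices and violating irreducibility. For any $j \neq i$ with $\ell_j = n-1$, the size bound together with (iii) at $u = 2$ restricts $n_j$ to $\{1, 3\}$; the value $n_j = 1$ would collide with the image of segment $i$, and two distinct segments both having $n_j = 3$ and length $n-1$ would share the value $3$ in their images. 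Both possibilities violate irreducibility, yielding $n_j = 3$ together with uniqueness. The delicate step, in my view, is (iii): one must confirm that Lemma \ref{cases}'s local dichotomy, the per-segment bound from (i), and the global lower bound $\ell \geq (n-1)w$ are jointly tight enough to exclude the equality case for every admissible $(w,r)$ from Lemma \ref{weight} once $n$ is large.
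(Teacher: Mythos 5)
Your proof is correct and follows essentially the same route as the paper's: part (i) via Lemma \ref{supp}(ii), part (ii) via Lemma \ref{supp}(iii) and parity of the jumps, part (iii) via Lemma \ref{cases} plus the length bound $(n-1)w\leq\ell$, and part (iv) via the contrapositive of (iii) combined with irreducibility. The only deviations are cosmetic: in (iii) you use the tighter per-segment bound $\ell_j\leq n+1$ from your part (i) to reach $n\leq 2w-1$, whereas the paper reuses the looser bound $\ell_j\leq n+3$ from the proof of Lemma \ref{jumps} to reach $n\leq 4w-5$ (both conclude by $w\leq 5$ and $n$ large); and in (iv) you spell out explicitly why $\ell_j\leq n-1$ for all $j\neq i$ and why the segment of length $n-1$ is unique, points the paper leaves implicit.
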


\begin{proof}
The first assertion follows from Lemma \ref{supp} (ii); since the support has $w$ jumps, we must have $n_k2^{\ell_k-1}\leq 2^{n+1}-3$ for all $k$. The second comes from assertion (iii) of the same lemma: the jumps are the $2^{\ell_k}n_k-n_{k+1}$, and they are in $D$; since $\ell_k\geq 1$ and all integers in $D$ are odd, we get the result.

Assume we have $n_k>2^u$ and $\ell_k\geq n-u+1$ for some $k$; from Lemma \ref{cases} (we know that the support of $U$ has $w$ jumps from Lemma \ref{jumps}), we must have $\ell_k+\ell_{k+1}\leq n+1$. As in the last proof, we get the inequality $(n-1)w\leq \ell=\sum \ell_i\leq (w-2)(n+3)+n+1$, from which we deduce $n\leq 4w-5$, and the third assertion.

Assume $\ell_i\geq n$; the contraposition of (iii), with $u=1$, ensures us we have $n_i\leq 2$, and $n_i=1$ since $n_i$ is odd. If moreover $\ell_j=n-1$, we get $n_j\leq 4$ in the same way, and $n_j=3$ since $n_j$ is odd and cannot be equal to $n_i$ from the irreducibility of $U$.
\end{proof}

\subsection{Solutions having low density}

In the following, we assume $n$ is large enough in order to apply the preceding results.

From Lemma \ref{weight}, we know that the possible densities of $U$ are $\frac{1}{n}<\frac{2}{2n-1}<\frac{3}{3n-2}<\frac{4}{4n-3}<\frac{1}{n-1}$. 

We shall treat these possible densities in increasing order. Let $U$ denote a solution having one of these densities; recall from Lemma \ref{jumps} that its support has $w$ jumps, and therefore can be written (up to shift)
$$n_1,\ldots,2^{\ell_1-1}n_1,\ldots,n_w,\ldots,2^{\ell_w-1}n_w$$

\subsubsection{Solutions having density $\frac{1}{n}$}

Let $U$ denote such a solution; in order to have $\delta(U)=\frac{1}{n}$, we must have $w=r$ in Lemma \ref{weight}; thus $w\in\{1,2\}$.

If we have $w=1$, then the support of $U$ is a geometric sequence of common ratio $2$, with initial term $n_1$, and inequality \ref{bound22} gives $(2^n-1)n_1\leq 2^{n+1}-3$, i.e. $n_1=1$; from Lemma \ref{supp} (iii), we get the solution
$$1\cdot(2^n-1)$$

When $w=2$, the support of $U$ consists of two geometric sequences from Lemma \ref{jumps}; we have $\ell_1+\ell_2=2n$, and $\ell_1:=\max\{\ell_i\}\geq n$. From Lemma \ref{length} (iv), we must have $\ell_1=n+1$, $\ell_2=n-1$, $n_1=1$ and $n_2=3$. From Lemma \ref{supp} (iii), we get the solution
$$2^{n-1}(2^{n+1}-3)+1\cdot(3\cdot2^n-1)=2^{2n}-1$$

\subsubsection{Solutions having density $\frac{2}{2n-1}$} Such a solution must have weight $2$ and length $2n-1$. If we assume $\ell_1>\ell_2$, we must have $\ell_1\geq n$.

If $\ell_1=n$, then $\ell_2=n-1$, and we have $n_1=1$ and $n_2=3$ from Lemma \ref{length} (iv), this gives the solution 
$$2^{n-1}\cdot(2^{n}-3)+1\cdot(3\cdot 2^{n-1}-1)=2^{2n-1}-1$$
From Lemma \ref{length} (i), the only other possible lengths are $\ell_1=n+1$ and $\ell_2=n-2$. Here again we get $n_1=1$, and assertions (ii) and (iii) of the same lemma give $n_2\in\{3,5,7\}$; up to shift we get the solutions 
$$2^{n-2}\cdot(2^{n+1}-i)+1\cdot(2^{n-2}i-1)=2^{2n-1}-1,~i\in \{3,5,7\}$$

\subsubsection{Solutions having density $\frac{3}{3n-2}$} Such a solution must have weight $3$ and length $3n-2$. If we assume $\ell_1=\max\{\ell_i\}$, we must have $\ell_1\geq n$, $n_1=1$ and $\ell_2,\ell_3\leq n-1$ from Lemma \ref{length} (iv). From the same assertion, we cannot have $\ell_2=\ell_3=n-1$; as a consequence, we get $\ell_1=n+1$, and $\{\ell_2,\ell_3\}=\{n-1,n-2\}$. Moreover the initial term $n_i$ corresponding to the length $\ell_i=n-1$ must be $3$, and the other one must be at most $8$, thus $5$ or $7$. We get the following solutions up to shift, for $i\in\{5,7\}$

$$\left\{
\begin{array}{l}
2^{2n-3}\cdot(2^{n+1}-3)+2^{n-2}\cdot(3\cdot2^{n-1}-i)+1\cdot(i\cdot 2^{n-2}-1)=2^{3n-2}-1 \\
2^{2n-3}\cdot(2^{n+1}-i)+2^{n-1}\cdot(i\cdot2^{n-2}-3)+1\cdot(3\cdot 2^{n-1}-1)=2^{3n-2}-1 \\
\end{array}
\right.$$

\subsubsection{Solutions having density $\frac{4}{4n-3}$} Such a solution must have weight $4$ and length $4n-3$. If we assume $\ell_1=\max\{\ell_i\}$, we must have $n\leq\ell_1\leq n+1$, $n_1=1$ and $\ell_2,\ell_3,\ell_4\leq n-1$ with at most one being equal to $n-1$ from Lemma \ref{length} (iv). We deduce that $\ell=4n-3=\sum \ell_i\leq n+1+n-1+2(n-2)$, a contradiction. There does not exist any solution having density $\frac{4}{4n-3}$.

\subsubsection{Solutions having density $\frac{1}{n-1}$} Such a solution has weight $w$ and length $(n-1)w$ for some $1\leq w\leq 5$. 

When $w=1$, the support is a geometric sequence of length $n-1$; from \cite[Lemma 1.2]{dens}, we get the inequality $n_1(2^{n-1}-1)\leq 2^{n+1}-3$, and $n_1\leq 3$. This gives the solutions 
\begin{equation}
\label{n-11}
\left\{ \begin{array}{l}
1\cdot(2^{n-1}-1)\\
1\cdot(3\cdot2^{n-1}-3)\\
\end{array}\right.
\end{equation}

When $w=2$, we have $\ell_1+\ell_2=2n-2$. Assume $\ell_1\geq \ell_2$; we have three possibilities. If $\ell_1=n+1$, $\ell_2=n-3$, we must have $n_1=1$ and $n_2\leq 16$ from Lemma \ref{length}. When $\ell_1=n$, $\ell_2=n-2$, we must have $n_1=1$ and $n_2\leq 8$. Finally, if $\ell_1=\ell_2=n-1$, we must have $\{n_1,n_2\}=\{1,3\}$. Summarizing, we get the solutions
\begin{equation}
\label{n-12}
\left\{ \begin{array}{ll}
2^{n-3}(2^{n+1}-i)+1\cdot(i\cdot2^{n-3}-1),& i\in \{3,5,\ldots,15\}\\
2^{n-2}(2^{n}-i)+1\cdot(i\cdot2^{n-2}-1),& i\in \{3,5,7\}\\
2^{n-1}(2^{n-1}-3)+1\cdot(3\cdot2^{n-1}-1)\\
\end{array}\right.
\end{equation}

For $w=3$, if we set $\ell_1=\max\{\ell_i\}$, we cannot have $\ell_1=n-1$: in this case all $\ell_i$ are $n-1$, and from Lemma \ref{length} (iv) all $n_i$ must be less than or equal to $4$, pairwise distinct, and odd; a contradiction. Thus we have $\ell_1\geq n$, and $n_1=1$. If $\ell_1=n$, we must have $\{\ell_2,\ell_3\}=\{n-1,n-2\}$, $n_i=3$ when $\ell_i=n-1$, and the other $n_j$ in $\{5,7\}$. If $\ell_1=n+1$, we must have $\{\ell_2,\ell_3\}=\{n-1,n-3\}$ or $\ell_2=\ell_3=n-2$. This gives five possible types of solutions up to shift
\begin{equation}
\label{n-13}
\left\{ \begin{array}{ll}
2^{2n-3}(2^{n}-3)+2^{n-2}(3\cdot2^{n-1}-i)+1\cdot(i\cdot2^{n-2}-1),& i\in \{5,7\}\\
2^{2n-3}(2^{n}-i)+2^{n-1}(i\cdot2^{n-2}-3)+1\cdot(3\cdot2^{n-1}-1),& i\in \{5,7\}\\
2^{2n-4}(2^{n+1}-i)+2^{n-1}(i\cdot2^{n-3}-3)+1\cdot(3\cdot2^{n-1}-1),& i\in \{5,7,\cdots,15\}\\
2^{2n-4}(2^{n+1}-3)+2^{n-3}(3\cdot2^{n-1}-i)+1\cdot(i\cdot2^{n-3}-1),& i\in \{5,7,\cdots,15\}\\
2^{2n-4}(2^{n+1}-i)+2^{n-2}(i\cdot2^{n-2}-j)+1\cdot(j\cdot2^{n-2}-1),& i,j\in \{3,5,7\},~ i\neq j\\
\end{array}\right.
\end{equation}

When $w=4$, we show as above that $\ell_1=n+1$, one of the $\ell_i$ is $n-1$, and the other two are equal to $n-2$. The corresponding initial terms are $1$, $3$, $5$ and $7$, and we get the solutions
\begin{equation}
\label{n-14}
\left\{ \begin{array}{l}
2^{3n-5}(2^{n+1}-3)+2^{2n-4}(3\cdot2^{n-1}-i)+2^{n-2}(i\cdot2^{n-2}-j)+1\cdot(j\cdot2^{n-2}-1),~ i,j\in \{5,7\},~ i\neq j\\
2^{3n-5}(2^{n+1}-i)+2^{2n-3}(i\cdot2^{n-2}-3)+2^{n-2}(3\cdot2^{n-1}-j)+1\cdot(j\cdot2^{n-2}-1),~ i,j\in \{5,7\},~ i\neq j\\
2^{3n-5}(2^{n+1}-i)+2^{2n-3}(i\cdot2^{n-2}-j)+2^{n-1}(j\cdot2^{n-2}-3)+1\cdot(3\cdot2^{n-1}-1),~ i,j\in \{5,7\},~ i\neq j\\
\end{array}\right.
\end{equation}

Finally, the case $w=5$ is impossible; as above we must have $\ell_1\leq n+1$, and all other $\ell_i\leq n-1$, with equality for at most one. But we cannot get  $\sum \ell_i=5n-5$ in this way.

We summarize these results in the following

\begin{proposition}
\label{dens22}
Assume $p=2$ and $D=\{1\leq i\leq 2^{n+1}-3,~i\equiv 1\mod 2\}$, with $n$ large enough.

\begin{itemize}
\item[(i)] The minimal irreducible solutions have density $\frac{1}{n}$, they are the
$$1\cdot(2^n-1),~ 2^{n-1}(2^{2n+1}-3)+1\cdot(3\cdot2^n-1)$$

\item[(ii)] There does not exist any irreducible solution having density in the interval
 $\left]\frac{1}{n},\frac{2}{2n-1}\right[$, and the irreducible solutions having density $\frac{2}{2n-1}$ are, up to shift 
$$\left\{
\begin{array}{rl}
2^{n-1}\cdot(2^{n}-3)+1\cdot(3\cdot 2^{n-1}-1)=2^{2n-1}-1 & \\

2^{n-2}\cdot(2^{n+1}-i)+1\cdot(2^{n-2}i-1)=2^{2n-1}-1,& i\in \{3,5,7\}\\

\end{array}
\right.$$

\item[(iii)] There are exactly four (up to shift) irreducible solutions having density in the interval
 $\left]\frac{2}{2n-1},\frac{1}{n-1}\right[$, and all have density $\frac{3}{3n-2}$. These are the following ones, where $i\in \{5,7\}$
$$\left\{
\begin{array}{l}
2^{2n-3}\cdot(2^{n+1}-3)+2^{n-2}\cdot(3\cdot2^{n-1}-i)+1\cdot(i\cdot 2^{n-2}-1)=2^{3n-2}-1 \\
2^{2n-3}\cdot(2^{n+1}-i)+2^{n-1}\cdot(i\cdot2^{n-2}-3)+1\cdot(3\cdot 2^{n-1}-1)=2^{3n-2}-1 \\
\end{array}
\right.$$

\item[(iv)] The solutions having density $\frac{1}{n-1}$ are the ones given in (\ref{n-11}) to (\ref{n-14}).
\end{itemize}

\end{proposition}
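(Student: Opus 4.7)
The proposition is essentially a consolidation of the case analysis carried out in the five subsubsections preceding it, so my plan is to organize that material into a single streamlined argument keyed to the classification by weight.

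First I would recall the starting point: given an irreducible solution $U$ of density in $\left[\frac{1}{n},\frac{1}{n-1}\right]$, Lemma \ref{weight} forces $(w,r)$ to lie in the explicit finite list, so $w\in\{1,2,3,4,5\}$; Lemma \ref{jumps} then tells us that for $n$ large enough the support $\varphi_U$ has exactly $w$ jumps, hence can be written as a concatenation of $w$ geometric progressions of ratio $2$ and lengths $\ell_1,\ldots,\ell_w$ with initial terms $n_1,\ldots,n_w$ summing to length $\ell=(n-1)w+r$. Lemma \ref{length} supplies the three structural facts I will use repeatedly: each $\ell_k\leq n+1$, each $n_k$ is odd, $n_k>2^u$ forces $\ell_k\leq n-u$, and $\ell_k\geq n$ forces $n_k=1$ with at most one other $\ell_j=n-1$ (in which case $n_j=3$).

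Next I would treat each density in increasing order. For density $\frac{1}{n}$ (so $w=r\in\{1,2\}$), the rigidity from Lemma \ref{length} (iv) combined with $\sum\ell_i=nw$ pins down $(\ell_1,\ell_2)=(n+1,n-1)$ and $(n_1,n_2)=(1,3)$ when $w=2$, and $\ell_1=n$, $n_1=1$ when $w=1$, yielding assertion (i). For density $\frac{2}{2n-1}$ (so $w=2$, $r=1$), the constraint $\ell_1+\ell_2=2n-1$ with $\ell_1\geq n$ leaves only the pairs $(n,n-1)$ and $(n+1,n-2)$; Lemma \ref{length} (iii) bounds $n_2$ in each case and gives assertion (ii). Density $\frac{3}{3n-2}$ (so $w=3$, $r=1$) is handled similarly: one shows $\ell_1=n+1$ and $\{\ell_2,\ell_3\}=\{n-1,n-2\}$ (the alternative $\ell_1=n$, $\{\ell_2,\ell_3\}=\{n-1,n-2\}$ gives density $\frac{3}{3n-2}$ too but falls into the same family), and the bounds on $n_i$ restrict initial terms to $\{3,5,7\}$ with the specific pairings in assertion (iii). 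Density $\frac{4}{4n-3}$ is ruled out by the length count $\ell=4n-3\leq (n+1)+(n-1)+2(n-2)=4n-4$, a contradiction. Finally, for density $\frac{1}{n-1}$ I would reproduce the case-by-case enumeration of subsubsection 2.2.5 for $w=1,\ldots,5$, the $w=5$ case being ruled out by the same length-count argument, giving assertion (iv).

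The last step in every case is to verify that once the lengths and initial terms are fixed, the actual solution $U$ is determined by Lemma \ref{supp} (iii) (since the support has $w$ jumps), and that the resulting $\sum_D d u_d$ equals the claimed multiple of $2^\ell-1$; this reduces to the identities displayed in each case, which follow from summing the geometric pieces.

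The main obstacle is not conceptual but bookkeeping: carefully eliminating the numerous $(\ell_1,\ldots,\ell_w;n_1,\ldots,n_w)$ combinations permitted by the inequalities in Lemma \ref{length} without missing any cases. In particular, in the $\frac{1}{n-1}$ analysis one must be careful to track configurations where two lengths tie at $n-1$, since assertion (iv) of Lemma \ref{length} then forces both corresponding $n_i$ to be small and pairwise distinct, and irreducibility (injectivity of $\varphi_U$) must be re-checked to rule out spurious collisions between jumps in different geometric pieces.
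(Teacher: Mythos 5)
Your proposal follows the paper's proof essentially verbatim: Lemma \ref{weight} bounds $(w,r)$, Lemma \ref{jumps} gives exactly $w$ jumps, Lemma \ref{length} constrains the lengths and initial terms, and then a density-by-density enumeration finishes. That is exactly the organization of the five subsubsections in Section 2.2, and your summary is accurate.

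One slip: in your discussion of density $\frac{3}{3n-2}$ (so $\ell = 3n-2$), the parenthetical claim that ``the alternative $\ell_1=n$, $\{\ell_2,\ell_3\}=\{n-1,n-2\}$ gives density $\frac{3}{3n-2}$ too'' is wrong --- that configuration has total length $n+(n-1)+(n-2)=3n-3$ and hence density $\frac{1}{n-1}$, and it indeed appears later among the $w=3$ solutions in (\ref{n-13}), not here. The correct elimination is simply by length count: $\ell_2+\ell_3\le(n-1)+(n-2)=2n-3$ by Lemma \ref{length}~(iv), so $\ell_1\ge n+1$, forcing $\ell_1=n+1$ and $\{\ell_2,\ell_3\}=\{n-1,n-2\}$. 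This does not affect your conclusion, but the stated justification there is confused.
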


From the above results, we deduce the $2$-densities of certain sets of exponents in the following result. Note that from \cite{dens}, this gives the first slopes of the generic Newton polygons $\GNP(D,2)$ for the sets $D$ under consideration.

\begin{corollary}
Assume $d$ is an odd integer, large enough, and let $n$ denote the integer such that $2^n-1\leq d\leq 2^{n+1}-3$.

\begin{itemize}

\item[(i)] Assume $2^n-1< d< 3\cdot 2^{n-1}-1$, then the $2$-density of the set $D:=\{1\leq i\leq d,~ (2,i)=1\}\backslash\{2^n-1\}$ is $\frac{1}{n-1}$.

\item[(ii)] Assume $d=3\cdot 2^{n-1}-1$,
\begin{itemize}

\item[(a)] the $2$-density of the set $D:=\{1\leq i\leq d,~ (2,i)=1\}\backslash\{2^n-1\}$ is $\frac{2}{2n-1}$.
\item[(b)] the $2$-density of the set $D:=\{1\leq i\leq d,~ (2,i)=1\}\backslash\{2^n-3,2^n-1\}$ is $\frac{1}{n-1}$.
\end{itemize}
\item[(ii)] Assume $3\cdot 2^{n-1}-1<d<2^{n+1}-7$,
\begin{itemize}

\item[(a)] the $2$-density of the set $D:=\{1\leq i\leq d,~ (2,i)=1\}\backslash\{2^n-1\}$ is $\frac{2}{2n-1}$.
\item[(b)] the $2$-density of any of the sets $D:=\{1\leq i\leq d,~ (2,i)=1\}\backslash\{2^n-3,2^n-1\}$ and $D:=\{1\leq i\leq d,~ (2,i)=1\}\backslash\{2^n-1,3\cdot 2^{n-1}-1\}$ is $\frac{1}{n-1}$.
\end{itemize}

\item[(iv)] Assume $2^{n+1}-7\leq d\leq 2^{n+1}-5$, then the $2$-density of the set $D:=\{1\leq i\leq d,~ (2,i)=1\}\backslash\{2^n-1\}$ is $\frac{2}{2n-1}$.

\item[(v)] Assume $ d=2^{n+1}-3$, then the $2$-density of the set $D:=\{1\leq i\leq d,~ (2,i)=1\}\backslash\{2^n-1,3\cdot 2^{n-1}-1\}$ is $\frac{2}{2n-1}$.
\end{itemize}

\end{corollary}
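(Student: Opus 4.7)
Let $D_0 := \{1 \leq i \leq 2^{n+1} - 3,~ i \equiv 1 \mod 2\}$. Every set $D$ appearing in the statement is a subset of $D_0$, and a solution of \eqref{mod} for $D$ is precisely a solution $U=(u_d)_{d\in D_0}$ for $D_0$ with $u_d = 0$ for $d \in D_0 \setminus D$. The plan is to compute $\delta_{D,2}$ by filtering the list of irreducible solutions in Proposition \ref{dens22} by the condition ``supported on $D$'' and reading off the smallest density surviving.

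The reduction to irreducible solutions is valid because $\delta_{D,2}$ is attained on one: if $\varphi_U$ is not injective, one splits $U$ at a repeated value of $\varphi_U$ into two solutions whose lengths and $2$-weights each sum to those of $U$. Then $\delta(U)$ is a convex combination of their densities, so one of them is $\leq \delta(U)$; iterating yields an irreducible solution of density no greater. This is the only non-combinatorial step in the argument.

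I would then tabulate, for each irreducible solution in Proposition \ref{dens22}, its coordinate set $\{d \in D_0 : u_d \neq 0\}$. The density $\frac{1}{n}$ solutions use $\{2^n - 1\}$ or $\{2^{n+1} - 3,\, 3 \cdot 2^{n-1} - 1\}$; those of density $\frac{2}{2n-1}$ use $\{2^n - 3,\, 3 \cdot 2^{n-1} - 1\}$ or $\{2^{n+1} - i,\, i \cdot 2^{n-2} - 1\}$ for $i \in \{3,5,7\}$; those of density $\frac{3}{3n-2}$ each contain some $2^{n+1} - i$; and $1 \cdot (2^{n-1} - 1)$ realises density $\frac{1}{n-1}$ on $\{2^{n-1} - 1\}$.

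With this table the five cases reduce to direct inspection. In (i), $d < 3 \cdot 2^{n-1} - 1$ removes $3 \cdot 2^{n-1} - 1$ and every $2^{n+1} - i$ from $D$ while $2^n - 1$ is excluded explicitly, killing every coordinate set of density below $\frac{1}{n-1}$ and leaving $\{2^{n-1} - 1\}$. In (ii)(a), (iv), and the variant of the third range with only $2^n - 1$ excluded, the set $\{2^n - 3,\, 3 \cdot 2^{n-1} - 1\} \subset D$ delivers $\frac{2}{2n-1}$, while both density $\frac{1}{n}$ sets are obstructed (by the removal of $2^n - 1$ and the bound $d < 2^{n+1} - 3$); in (v), with $3 \cdot 2^{n-1} - 1$ removed, the analogous role is played by the $i = 3$ coordinate set $\{2^{n+1} - 3,\, 3 \cdot 2^{n-2} - 1\}$. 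In (ii)(b) and in the variants of the third range with the extra exclusion, every density $\frac{2}{2n-1}$ coordinate set is broken, leaving $\{2^{n-1} - 1\}$ as the survivor. The main obstacle is not conceptual but the bookkeeping itself; ``$n$ large enough'' is needed so that inequalities such as $2^{n+1} - 7 > 3 \cdot 2^{n-1} - 1$ hold, ensuring the required containments.
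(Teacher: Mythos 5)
Your proposal is correct, and it takes exactly the (implicit) route the paper intends: the corollary is stated in the paper without explicit proof, immediately after Proposition \ref{dens22}, and the intended argument is precisely the one you spell out — identify solutions for a subset $D\subset D_0$ with solutions for $D_0$ vanishing off $D$, observe that the minimal density is attained on an irreducible solution, and then filter the finite list from Proposition \ref{dens22} by which coordinate sets survive the removal of elements. Your tabulation of coordinate sets is accurate (you correctly read $\{2^{n+1}-3,\,3\cdot 2^{n-1}-1\}$ for the second minimal solution, fixing a typo in the proposition's displayed formula), and the case-by-case check lands on the stated densities. The only thing worth tightening in a final write-up is the splitting step: you should note that when $\varphi_U(i)=\varphi_U(j)$ the restriction to each of the two arcs between $i$ and $j$ in $\Z/\ell\Z$ is again a support map, so the corresponding subsolutions have lengths and weights summing to those of $U$; that makes the convexity argument airtight. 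Otherwise this matches the paper's approach.
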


\section{First vertices}
\label{s3}

Let $C$ denote an hyperelliptic curve of genus $g$, and $2$-rank $0$, defined over the finite field $\F_q$; from \cite[Proposition 4.1]{sz1}, such a curve admits an equation of the form
$$C_f:~ y^2+y=f(x):=\sum_{i=0}^g c_{2i+1}x^{2i+1},~c_{2i+1}\in \F_q,~c_{2g+1}\neq 0$$
Our aim here is to give the first vertex for the Newton polygon $\NP(C_f)$ of the numerator of the zeta function of this curve for as many polynomials $f$ as possible. 

To obtain this, we give a congruence for the numerator $L(C_f,T)$. We know that $L(C_f,T)=L(f;T)$, and we can apply the congruence in \cite[Remark 1]{cong}. For any polynomial $f$ having its exponents in $D$, this last congruence can be written 

$$L(C_f,T)\equiv \det\left(\I-M(\Gamma)^{\tau^{m-1}}\cdots M(\Gamma)\pi^{m(p-1)\delta}T\right)  \mod I_{\delta}$$

where the matrix $M(\Gamma)$ is defined in \cite[Definition 3.6]{cong} from the minimal irreducible solutions associated to $D$ and the prime $2$, and $\delta$ is the $2$-density of this set. 

The reduction modulo $p$ of the matrix $M(\Gamma)$, $\overline{M}(\Gamma)$, has a rather simple description in the cases under consideration here. If $U_1,\cdots,U_k$ are the minimal irreducible solutions up to shift, the union of their supports is a set $\Sigma:=\{s_1,\ldots,s_N\}$ of positive integers, the \emph{minimal support}. Then $\overline{M}(\Gamma)$ is the $N\times N$ matrix whose $(i,j)$ coefficient is 
$$m_{ij}=\left\{
\begin{array}{ll}
 1 & \textrm{ if } s_j=2s_i \\
c_d & \textrm{ when } 2s_i-s_j=d \textrm{ and we have } u_{dr}=1 \textrm{ for some } r \textrm{ and some minimal irreducible solution } U \\
0 & \textrm{ else }\\
\end{array} \right.$$ 

In the following, we denote by $e(s_1),\ldots,e(s_N)$ the canonical basis of $k^N$, and we denote by $\varphi$ the Frobenius linear morphism of $k^N$ whose matrix in this basis is the transpose of $\overline{M}(\Gamma)$.

Our main tool to determine the first vertex will be the following \cite[Corollary 3.2]{blabla}

\begin{proposition}
\label{firstvertex}
Notations are as above. Denote by $V_{ss}$ the space $\cap_{n\geq 0} \Ima \varphi^n$. Assume $V_{ss}\neq \{0\}$; then the first vertex of the Newton polygons $\NP_q(f)=\NP_q(C_f)$ is $(\dim V_{ss},\delta\dim V_{ss})$.
\end{proposition}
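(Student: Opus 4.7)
The plan is to read off the first vertex of $\NP_q(C_f)$ directly from the congruence
$$L(C_f,T)\equiv \det\bigl(\I - M(\Gamma)^{\tau^{m-1}}\cdots M(\Gamma)\,\pi^{m(p-1)\delta}T\bigr) \mod I_{\delta}$$
recalled just above. Setting $A_m := M(\Gamma)^{\tau^{m-1}}\cdots M(\Gamma)$ and expanding the determinant, the right-hand side equals $\sum_{k=0}^N (-1)^k e_k(A_m)\,\pi^{km(p-1)\delta}T^k$, where $e_k$ is the $k$-th elementary symmetric function of the eigenvalues. The congruence then tells us that the coefficient of $T^k$ in $L(C_f,T)$ has $q$-adic valuation at least $k\delta$, with equality precisely when $e_k(A_m)$ reduces to a nonzero element of the residue field.

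Next I identify this reduction. By construction of $\overline{M}(\Gamma)$, the reduction of $A_m$ modulo the maximal ideal is $\overline{A_m} = \overline{M}(\Gamma)^{\tau^{m-1}}\cdots \overline{M}(\Gamma)$, and upon transposition this is the matrix of $\varphi^m$ in the basis $e(s_1),\ldots,e(s_N)$. Consequently the rank of $\overline{A_m}$ equals $\dim \Ima \varphi^m$, and the descending chain $\Ima\varphi\supseteq\Ima\varphi^2\supseteq\cdots$ stabilizes to $V_{ss}$ for $m$ large enough. Since $\varphi$ restricts to a Frobenius-linear bijection of $V_{ss}$, for such $m$ the characteristic polynomial of $\overline{A_m}$ satisfies $e_d(\overline{A_m}) = \det(\varphi^m|_{V_{ss}}) \neq 0$ while $e_k(\overline{A_m}) = 0$ for all $k > d := \dim V_{ss}$.

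Combining the two steps, the $T^d$-coefficient of $L(C_f,T)$ has valuation exactly $d\delta$, so the point $(d,d\delta)$ lies on $\NP_q(C_f)$, while the coefficients of $T^k$ for $k>d$ are forced by the congruence to have valuation strictly greater than $k\delta$. Because the density $\delta$ is already a lower bound for the first slope (by the density results of the previous section), and because the hypothesis $V_{ss}\neq\{0\}$ guarantees $d\geq 1$ so that a segment of slope $\delta$ is actually present, convexity of the Newton polygon pins $(d,d\delta)$ down as its first vertex.

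The main obstacle is the bookkeeping in the second step: one must verify that $\overline{A_m}$ really is the matrix of $\varphi^m$ (tracking the Frobenius-semilinear twists $\tau^i$ correctly through the product) and that rank stabilization forces the vanishing of all higher elementary symmetric functions. A delicate secondary point is that the congruence only bounds valuations from below; to pin the vertex down one must rule out cancellation in the $T^d$-coefficient, which is exactly where the bijectivity of $\varphi$ on $V_{ss}$ and the non-triviality hypothesis $V_{ss}\neq\{0\}$ are essential.
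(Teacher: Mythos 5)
The paper does not actually prove Proposition~\ref{firstvertex}: it is imported verbatim from \cite[Corollary 3.2]{blabla} and used as a black box, so there is no in-paper argument to measure you against. That said, your reconstruction is sound in outline and follows the route one would expect: read off the $T^k$-coefficient of the right-hand side of the congruence as $\pm e_k(A_m)\,\pi^{km(p-1)\delta}$, reduce modulo the maximal ideal, identify the transpose of $\overline{A_m}$ with the matrix of $\varphi^m$, and use the Fitting decomposition of the $\sigma$-semilinear endomorphism $\varphi$ to conclude that $e_k(\overline{A_m})=0$ for $k>d:=\dim V_{ss}$ while $e_d(\overline{A_m})=\pm\det(\varphi^m|_{V_{ss}})\neq 0$; convexity together with the uniform lower bound $v_q(a_k)\geq k\delta$ then pins $(d,d\delta)$ as the first vertex.

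One adjustment worth making: you invoke ``$m$ large enough'' to make $\Ima\varphi^m$ stabilize to $V_{ss}$, but $m$ is fixed by $q=p^m$ and is not at your disposal. Fortunately this is unnecessary. For every $m\geq 1$ the decomposition $V=V_{ss}\oplus V_{nil}$, with $\varphi$ bijective on $V_{ss}$ and nilpotent on $V_{nil}$, is preserved by $\varphi^m$; hence the characteristic polynomial of $\overline{A_m}$ already factors as $\lambda^{N-d}Q(\lambda)$ with $Q(0)\neq 0$, which gives the vanishing of $e_k$ for $k>d$ and the non-vanishing of $e_d$ with no hypothesis on $m$. The genuine subtlety --- converting the congruence $\bmod\ I_\delta$ into the statement that the $T^k$-coefficient of $L(C_f,T)$ has $q$-adic valuation exactly $k\delta$ when $\overline{e_k(A_m)}\neq 0$ and strictly larger otherwise --- is precisely what the ideal $I_\delta$ of \cite{cong} is engineered to deliver, and you rightly flag it as the place where the real work hides, in \cite{blabla} rather than in the present paper.
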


Our strategy is the following: we start with the case $D=\{1\leq i\leq 2g+1,~ (i,2)=1\}$; in this case we get the first vertex for the generic Newton polygon associated to genus $g$ and $2$-rank $0$ hyperelliptic curves. We also determine a polynomial in the coefficients of $f$, the Hasse polynomial $H(f)$, which tells us that the polygon $\NP(f)$ has generic first vertex exactly when $H(f)\neq 0$. This is Theorem \ref{fv}, proven in the first subsection.

In some cases, this polynomial consists of a single monomial. When this happens, we can use the same method to determine the first vertices for most polynomials satisfying $H(f)=0$; actually we just have to replace the set $D$ by the sets $D_i:=D\backslash\{i\}$ for all $i$ such that the variable $c_i$ appears in the monomial $H$. This gives Theorem \ref{fv2}, proven in the second subsection.

\subsection{The generic Newton polygon}

We consider the set $D:=\{1,3,\ldots,2g+1\}$; let $n$ be the unique positive integer satisfying the inequalities $2^n-1\leq 2g+1\leq 2^{n+1}-3$. From the above section, the $2$-density of $D$ is always $\frac{1}{n}$, but the minimal solutions and the minimal support depend on whether we have $2g+1=2^{n+1}-3$ or not. Thus we have to consider two possibilities when we compute the minimal irreducible solutions associated to $D$, and the matrix $M(\Gamma)$.

In the following, we consider the matrices in ${\bf M}_n(\F_q)$ defined by
$$A_n(c):=
\left(\begin{array}{cc}
0 & \textbf{I}_{n-1} \\
c & 0 \\
\end{array}\right),~ 
B_n(c):=
\left(\begin{array}{cc}
0 & \textbf{O}_{n-1} \\
c & 0 \\
\end{array}\right)
$$

In terms of these matrices, we have

\begin{itemize}
	\item[(i)] assume $2^n-1\leq 2g+1<2^{n+1}-3$; there is a unique (up to shift) minimal irreducible solution $1\cdot(2^{n}-1)\equiv 0\mod 2^n-1$, the minimal support is $\{1,\ldots,2^{n-1}\}$, and we have
$$\overline{M}(\Gamma)=A_n(c_{2^n-1})$$
This matrix is invertible exactly when $c_{2^n-1}\neq 0$, and in this case the space $V_{ss}$ has dimension $n$. With the help of Proposition \ref{firstvertex}, this proves the first assertion of Theorem \ref{fv}.

	\item[(ii)] when $2g+1=2^{n+1}-3$; the two (up to shift) minimal irreducible solutions are given above, the minimal support is $\{1,\ldots,2^{n},3,\ldots,3\cdot 2^{n-2}\}$, and we have (note that we should have $c_{2g+1}$ at the $(n,n+1)$ place, but we can assume $c_{2g+1}=1$ from \cite[Proposition 4.1]{sz} to get a simpler form)

$$\overline{M}(\Gamma)=\left(\begin{array}{cc}
A_n(c_{2^n-1}) & B_n(1) \\
B_n(c_{3\cdot2^{n-1}-1}) & A_n(0) \\
\end{array}\right)$$
This matrix is invertible exactly when $c_{3\cdot2^{n-1}-1}\neq 0$, and in this case the space $V_{ss}$ has dimension $2n$. With the help of Proposition \ref{firstvertex}, this proves assertion (iia) of Theorem \ref{fv}. When $c_{3\cdot2^{n-1}-1}= 0$, the space $V_{ss}$ is generated by the first $n$ vectors exactly when $c_{2^n-1}\neq 0$, and this is assertion (iib) of Theorem \ref{fv}.
\end{itemize}

\subsection{Beyond the generic case}

We now consider what happens when the coefficients in Theorem \ref{fv} vanish; i.e. when we are in one of the two following cases (note that in the case $2g+1=2^n-1$ the first slope is always $(n,1)$)
\begin{itemize}
\item[(i)] $2^n-1<2g+1<2^{n+1}-3$ and $c_{2^n-1}=0$; this boils down to considering the set $D=\{1\leq i\leq 2g+1,~ (2,i)=1\}\backslash\{2^n-1\}$, and the associated minimal irreducible solutions.
\item[(ii)] $2g+1=2^{n+1}-3$ and $c_{2^n-1}=c_{3\cdot 2^{n-1}-1}=0$; this boils down to considering the set $D=\{1\leq i\leq 2g+1,~ (2,i)=1\}\backslash\{2^n-1,3\cdot 2^{n-1}-1\}$, and the associated minimal irreducible solutions.
\end{itemize}

Actually, we have to consider different cases: the set $D$ increases with the genus, and new solutions appear in Proposition \ref{dens22}, changing the density and the matrix $M(\Gamma)$.

\subsubsection{The case $d=2g+1< 3\cdot 2^{n-1}-1$} 

From Proposition \ref{dens22}, there is no solution having density in $\left]\frac{1}{n},\frac{1}{n-1}\right[$ for the set $D$. We have to consider solutions having density $\frac{1}{n-1}$.

First assume we have $d< 5\cdot2^{n-2}-1$; the only solutions are $1\cdot(2^{n-1}-1)$ with weight $1$, and $2^{n-2}(2^n-3)+1\cdot(3\cdot2^{n-2}-1)$ with weight $2$. We get the minimal support $\{1,\ldots,2^{n-1},3,\ldots,3\cdot2^{n-3}\}$, and we are exactly in the situation described when we were looking for the first vertex of the generic Newton polygon in the case $2g+1=2^{n+1}-3$, with $n$ being replaced by $n-1$. Thus the first vertex is $(2n-2,2)$ when $c_{2^n-3}c_{3\cdot2^{n-2}-1}\neq 0$, and $(n-1,1)$ when $c_{2^n-3}c_{3\cdot2^{n-2}-1}=0$ and $c_{2^{n-1}-1}\neq 0$.

If we have $5\cdot2^{n-2}-1\leq d\leq 3\cdot2^{n-1}-7$, we have a new solution, namely $2^{n-2}(2^n-5)+1\cdot(5\cdot2^{n-2}-1)$. The minimal support is now $\{1,\ldots,2^{n-1},3,\ldots,3\cdot2^{n-3},5,\ldots,5\cdot2^{n-3}\}$. Computing the iterates of the vector $e(1)$ under $\varphi$, we get that $V_{ss}$ has dimension $2(n-1)$ exactly when $c_{2^n-3}^{2^{n-2}}c_{3\cdot2^{n-2}-1}+c_{2^n-5}^{2^{n-2}}c_{5\cdot2^{n-2}-1}\neq 0$; thus the first vertex of $\NP(C_f)$ is $(2(n-1),2)$ if, and only if this polynomial is non zero. If it is zero, the space $V_{ss}$ has dimension $n-1$ if, and only if $c_{2^{n-1}-1}\neq 0$, and the first vertex is $(n-1,1)$ in this case.

When $d= 3\cdot2^{n-1}-5$, we get a new solution, of weight $3$, namely $2^{2n-3}(2^n-3)+2^{n-2}(3\cdot2^{n-1}-5)+1\cdot(5\cdot2^{n-2}-1)$. The minimal support is now $\{1,\ldots,2^{n-1},3,\ldots,3\cdot2^{n-2},5,\ldots,5\cdot2^{n-3}\}$. The matrix $\overline{M}(\Gamma)$ is in ${\bf M}_{3n-3}(\F_q)$ and it has determinant
$$c_{2^n-3}c_{3\cdot2^{n-1}-5}c_{5\cdot2^{n-2}-1}$$
The first vertex is $(3(n-1),3)$ exactly when this determinant is non zero. When it vanishes, we are reduced to the preceding case with the additional assumption $c_{2^n-3}c_{5\cdot2^{n-2}-1}=0$.

When $d= 3\cdot2^{n-1}-3$, we get a new solution, of weight $1$, namely $1\cdot(3\cdot2^{n-1}-3)$; this does not change the minimal support, but adds the new coefficient $c_{3\cdot2^{n-1}-3}$ at the intersection of line $2n-1$ and row $n+1$, and the determinant becomes
$$c_{5\cdot2^{n-2}-1}\left(c_{2^n-3}c_{3\cdot2^{n-1}-5}+c_{2^n-5}c_{3\cdot2^{n-1}-3}\right)$$

\subsubsection{The case $3\cdot 2^{n-1}-1\leq 2g+1< 2^{n+1}-7$} 

From Proposition \ref{dens22}, the $2$-density of the set $\{1,\ldots,2g+1\}\backslash\{2^n-1\}$ is $\frac{2}{2n-1}$, and the unique minimal solution is the first one in assertion (ii) of this Proposition. The minimal support is the support of this solution, namely $\{1,\cdots,2^{n-1},3,\cdots,3\cdot2^{n-2}\}$, and we have $\varphi(e(2^i))=e(2^{i+1})$ for $0\leq i\leq n-2$, $\varphi(e(3\cdot2^i))=e(3\cdot2^{i+1})$ for $0\leq i\leq n-3$, $\varphi(e(2^{n-1}))=c_{2^n-3}e(3)$ and $\varphi(e(3\cdot2^{n-2}))=c_{3\cdot2^{n-1}-1}e(1)$. We see that $V=V^{ss}$ if, and only if we have $c_{2^n-3}c_{3\cdot2^{n-1}-1}\neq 0$ since in this case any of the $e(i)$ is a cyclic vector; else we have $V=V^{nil}$.

In this case we can go one step further when $c_{2^n-3}c_{3\cdot2^{n-1}-1}=0$. This boils down to considering one of the sets $D':=D\backslash\{2^n-3\}$ or $D'':=D\backslash\{3\cdot2^{n-1}-1\}$. From the calculations above, both have density $\frac{1}{n-1}$ since the solutions having density $\frac{3}{3n-2}$ need an element of the form $2^{n+1}-i$, $i\in \{5,7\}$. We conclude that the generic first slope is $\frac{1}{n-1}$. We do not compute the first vertex nor its Hasse polynomial since there are many solutions having density $\frac{1}{n-1}$.

\subsubsection{The case $d=2^{n+1}-7$} 

The density of the set $\{1,\ldots,2g+1\}\backslash\{2^n-1\}$ remains $\frac{2}{2n-1}$, we get the new minimal irreducible solution of Proposition \ref{dens22} (i) with $i=7$, and the new elements $2^n$ and $7,\ldots,7\cdot2^{n-3}$ in the minimal support. The action of $\varphi$ is as described above, except $\varphi(e(2^{n-1}))=c_{2^n-3}e(3)+e(2^n)$, and the new $\varphi(e(2^{n}))=c_{2^{n+1}-7}e(7)$, $\varphi(e(7\cdot2^i))=e(7\cdot2^{i+1})$ for $0\leq i\leq n-4$ and $\varphi(e(7\cdot2^{n-3}))=c_{7\cdot2^{n-2}-1}e(1)$. In this case, the vector space generated by the iterates of $e(1)$ contains $V_{ss}$ (clearly the iterates of any basis vector land finally in this space). Moreover, the $2n-1$ vectors $\varphi^i(e(1))$, $0\leq i\leq 2n-2$ are linearly independent since $c_{2^{n+1}-7}$ is non zero, and we get 
$$\varphi^{2n-1}(e(1))=
\left(c_{2^{n+1}-7}^{2^{n-2}}c_{7\cdot2^{n-2}-1}+
c_{2^{n}-3}^{2^{n-1}}c_{3\cdot2^{n-1}-1}\right)e(1)$$
Thus the vector space $V_{ss}$ has dimension $2n-1$ exactly when the polynomial above is non zero.

\subsubsection{The case $d=2^{n+1}-5$} 

We reason the same way (with the new solution from assertion (i) with $i=5$, giving the new elements $5,\ldots,5\cdot2^{n-3}$ in the minimal support, etc...), and we get
$$\varphi^{2n-1}(e(1))=\left(c_{2^{n+1}-5}^{2^{n-2}}c_{5\cdot2^{n-2}-1}+
c_{2^{n+1}-7}^{2^{n-2}}c_{7\cdot2^{n-2}-1}+
c_{2^{n}-3}^{2^{n-1}}c_{3\cdot2^{n-1}-1}\right)e(1)$$

\subsubsection{The case $d=2^{n+1}-3$} 

Here we have $c_{2^n-1}=c_{3\cdot 2^{n-1}-1}=0$ in order for the first slope to be greater than $\frac{1}{n}$. From Proposition \ref{dens22}, the density of the set $\{1,\ldots,d\}\backslash\{2^n-1,3\cdot 2^{n-1}-1\}$ is $\frac{2}{2n-1}$, with the last three minimal irreducible solutions from Assertion (ii). We deduce the minimal support 
$$\{1,\cdots,2^{n},3,\cdots,3\cdot2^{n-3},5,\cdots,5\cdot2^{n-3},7,\cdots,7\cdot2^{n-3}\}$$
and the action of $\varphi$, given by $\varphi(e(2^i))=e(2^{i+1})$ for $0\leq i\leq n-1$, $\varphi(e(k\cdot2^i))=e(k\cdot2^{i+1})$, $\varphi(e(k\cdot2^{n-3}))=c_{k\cdot2^{n-2}-1}e(1)$ for $0\leq i\leq n-4$, $k\in\{3,5,7\}$, and
$\varphi(e(2^{n}))=c_{2^{n+1}-3}f(e(3))+c_{2^{n+1}-5}f(e(5))+c_{2^{n+1}-7}f(e(7))$.

Once again, we consider the (cyclic) subspace of $V$ generated by $e(1)$ and its iterates; it is clear from the description of $\varphi$ that the iterates of any of the vectors of the basis fall into this space; thus we have $V_1\supset V^{ss}$. When we compute the iterates of $e(1)$, we find the vectors $e(1),\ldots,\varphi^{2n-2}(e(1))$ are linearly independent since $c_{2^{n+1}-3}$ is non zero, and from the relation  
$$\varphi^{2n-1}(e(1))=\left(c_{2^{n+1}-3}^{2^{n-2}}c_{3\cdot2^{n-2}-1}+c_{2^{n+1}-5}^{2^{n-2}}
c_{5\cdot2^{n-2}-1}+c_{2^{n+1}-7}^{2^{n-2}}c_{7\cdot2^{n-2}-1}\right)e(1)$$
we deduce that $\dim V^{ss}=2n-1$ if, and only if the above polynomial is non zero.

\end{document}